\long\def\beginpgfgraphicnamed#1#2\endpgfgraphicnamed{\includegraphics{#1}}
\numberwithin{equation}{section}
\newtheorem{theorem}{Theorem}[section]
\newtheorem{lemma}[theorem]{Lemma}
\newtheorem{proposition}[theorem]{Proposition}
\newtheorem{corollary}[theorem]{Corollary}
\newtheorem{definition}[theorem]{Definition}
\newcommand{\cG}{{\mathcal{G}}}
\newcommand{\cI}{{\mathcal{I}}}
\newcommand{\cU}{{\mathcal{U}}}
\newcommand{\cD}{{\mathcal{D}}}
\newcommand{\cH}{{\mathcal{H}}^{d-1}}
\newcommand{\Rd}{\mathbb{R}^d}
\DeclareMathOperator{\diam}{diam}
\DeclareMathOperator{\dist}{dist}
\title[On Traces of stable processes on Lipschitz domains]{On the Traces of symmetric stable processes on Lipschitz domains}
\author{Rodrigo Ba\~nuelos}
\address{Department of Mathematics, Purdue University, West Lafayette, IN 47907, USA}
\email{banuelos@math.purdue.edu}
\author{Tadeusz Kulczycki}\thanks{T. Kulczycki was supported in part by KBN grant.}
\address{Institute of Mathematics, Polish Academy of Sciences, ul. Kopernika 18, 51-617 Wroc{\l}aw, Poland. Institute of Mathematics, Technical University of Wroc{\l}aw, Wybrzeze Wyspianskiego 27, 50-370 Wroc{\l}aw, Poland}
\email{tkulczycki@impan.pan.wroc.pl}
\author{Bart{\l}omiej Siudeja}
\address{Department of Mathematics, University of Illinois 
at Urbana-Champaign, 1409 W. Green Street, 
Urbana, IL 61801, USA}
\email{siudeja@illinois.edu}
\begin{document}
\maketitle

\begin{abstract}
  \noindent  It is shown that the second term in the asymptotic expansion as $t\to 0$ of the trace of the semigroup of symmetric stable processes (fractional powers of the Laplacian) of order $\alpha$, for any $0<\alpha<2$, in Lipschitz domains is given by the surface area of the boundary of the domain. This brings the asymptotics for the trace of stable processes in domains of Euclidean space on par with those of Brownian motion (the Laplacian), as far as boundary smoothness  is concerned. 

\end{abstract}

\section{Introduction and statement of main result}
Let $X_t$ be a symmetric $\alpha$-stable process in $\Rd$, $\alpha \in
(0,2]$. This is a process with independent and stationary increments
and characteristic function 
$E^0 e^{i \xi X_t} = e^{-t |\xi|^{\alpha}}$, $\xi \in \Rd$, $ t > 0$. By $p(t,x,y) = p_{t}(x-y)$ we
will denote the transition density of this process starting at the point $x$. That is, 
$$
P^{x}(X_t \in B)= \int_{B} p(t,x,y)\, dy.
$$
Since the transition density is obtained from the characteristic function by the 
 inverse Fourier transform, it follows trivially that $p_t(x)$ is a 
radial symmetric decreasing function  and that 
\begin{equation}\label{kernelbound1}
p_t(x) = t^{-d/\alpha} p_1(t^{-1/\alpha} x) \le t^{-d/\alpha} p_1(0), \quad t > 0, \, x \in \Rd.
\end{equation}
We also have (see (1.2) \cite{BK})
$$
p_t(0) = t^{-d/\alpha}\, \frac{\omega_d \Gamma(d/\alpha)}{(2\pi)^d\alpha},
$$
where $\omega_d =\frac{2\pi^{d/2}}{\Gamma(d/2)}$ is the surface area of the unit sphere in $\Rd$.

Let $D \subset \Rd$ be an open nonempty set and denote by $\tau_{D} = \inf\{t \ge 0: X_t \notin D\}$ the first exit time of $X_t$ from $D$.
The transition density $p_D(t,x,y)$ of the process killed while exiting a domain $D$ ({\it{$\alpha$-stable heat kernel}}) is defined by
\begin{gather}
  P^x(X_t\in A,\tau_D>t)=\int_A p_D(t,x,y)dy.
\end{gather}
This subprobabilistic density satisfies
\begin{gather}\label{p-rd}
  p_D(t,x,y)=p(t,x,y)-r_D(t,x,y),
\end{gather}
where
\begin{equation}
\label{rDtxy}
  r_D(t,x,y)=E^x(\tau_D<t;p(t-\tau_D,X(\tau_D),y)).
\end{equation}

By $\{P_t^{D}\}_{t \ge 0}$ we denote the semigroup
on $L^2(D)$ of $X_t$ killed upon exiting $D$.  That is, for any $t>0$ and $f\in L^2(D)$ we define 
$$
P_{t}^{D}f(x) = E^{x}(\tau_{D} > t; f(X_{t})) = \int_{D} p_{D}(t,x,y) f(y) \, dy, \quad x \in D. 
$$
The study of the spectral properties of the { \it $\alpha$-stable heat semigroup} $\{P_t^{D}\}_{t \ge 0}$ has been the subject of many papers in recent years see e.g. \cite{BK1}, \cite{BK}, \cite{CS1}, \cite{CS2}, \cite{Bl}.
Whenever  $D$ is bounded (or of finite volume), the operator $P_t^D$ maps $L^2(D)$
into $L^{\infty}(D)$ for every $t>0$.  This follows from (\ref{kernelbound1}), (\ref{rDtxy}), and the general theory of heat semigroups as described in \cite{Da}.  In fact, it follows from 
\cite{Da} that there exists an orthonormal basis of eigenfunctions
$\{\varphi_n\}_{n =1}^{\infty}$ for $L^2(D)$ and corresponding eigenvalues $\{\lambda_n\}_{n = 1}^{\infty}$of the generator of the semigroup $\{P_t^{D}\}_{t \ge 0}$ satisfying
$$0<\lambda_1<\lambda_2\leq \lambda_3\leq \dots , $$
  with $\lambda_n\to\infty$
as
$n\to\infty$. That is,  the pair $\{\varphi_n, \lambda_n\}$ satisfies
\begin{equation*}
P_{t}^{D}\varphi_{n}(x) = e^{-\lambda_{n} t} \varphi_{n}(x), \quad x \in D, \,\,\, t > 0.
\end{equation*}
Under such assumptions we have 
\begin{equation}\label{eigenexpan}
p_D(t,x,y) = \sum_{n = 1}^{\infty} e^{-\lambda_{n} t} \varphi_{n}(x) \varphi_n(y).
\end{equation}

The trace of the $\alpha$-stable heat kernel on $D$ (often referred to as {\it{the partition function of $D$}}) is defined by
\begin{gather}
\label{trace1}
  Z_D(t)=\int_D p_D(t,x,x)dx.
\end{gather}
Because of (\ref{eigenexpan}), we can rewrite (\ref{trace1}) as
\begin{equation}\label{partition}
Z_D(t) = \sum_{n = 1}^{\infty} e^{-\lambda_{n} t} \int_{D} \varphi_{n}^2(x) \, dx = 
\sum_{n = 1}^{\infty} e^{-\lambda_{n} t}.
\end{equation}

It is shown in \cite{BG}  that  
for any open set $D\subset \Rd$ of finite volume 
\begin{equation}\label{bg2}
\lim_{t\to 0}t^{d/\alpha}Z_D(t)=C_1 |D|,
\end{equation}
where $C_1 = \omega_d \Gamma(d/\alpha) (2\pi)^{-d} \alpha^{-1}$.
This result is proved in \cite{BG} under the assumption that $\partial{D}$ has zero Lebesgue measure.  
As observed in Remark 2.2 of \cite{BK}, the result in fact holds for all open sets of finite volume. 

As is well known, the asymptotic behavior of the partition function as $t\to 0$ implies Weyl's  formula on the growth of the number of eigenvalues. Indeed, if we let $N(\lambda)$ be the number of eigenvalues $\{\lambda_j\}$ which do not exceed $\lambda$, it follows from (\ref{bg2}) and the classical Karamata tauberian theorem (see \cite{BK} for full details) that 
\begin{equation}\label{bg3}
N(\lambda) \sim \frac{{C_1 |D|}}{\Gamma(d/\alpha+1)}\,\lambda^{d/\alpha},\,\,\, \text{as}\,\, \lambda\to\infty.
\end{equation}
This is the analogue for stable processes of the celebrated Weyl's asymptotic formula for the eigenvalues  of  the Laplacian.   

The asymptotics for the trace of the heat kernel when $\alpha=2$ (the case of the Laplacian 
with Dirichlet boundary condition in a  domain of $\Rd$), have been extensively studied by many authors.  In particular, van den Berg \cite{vanden} proved that 
under an assumption of 
 $R$--smoothness of the boundary (that is $\partial{D}$ satisfies uniform outer and inner ball condition with radius $R$), when $\alpha=2$, 
\begin{equation}
\label{vanden}
\left|Z_D(t) - (4\pi t)^{-d/2}\left(|D|-\frac{\sqrt{\pi t}}{2} 
\cH(\partial{D}) \right)\right|
\le \frac{C_d |D| t^{1-d/2}}{R^2}, \,\,\,  t>0.
\end{equation}
When the domain has  $C^1$ boundaries the result 
\begin{equation}
\label{brossardcarmona}
Z_D(t) = (4\pi t)^{-d/2}\left(|D|-\frac{\sqrt{\pi t}}{2} \cH(\partial{D}) +o(t^{1/2})\right),\,\,\, t\to 0,
\end{equation}
 was proved by Brossard and Carmona in \cite{BroCar}.  R. Brown subsequently extended (\ref{brossardcarmona}) to Lipschitz domains  in \cite{B}.  We refer the reader to \cite{BroCar} and \cite{B} for more on the literature and history of these type of asymptotic results.
 

In \cite{BK}, the exact analogue of (\ref{vanden}) was proved for any stable processes of order $0<\alpha<2$. Our goal in this paper is to obtain the second term in the asymptotics of $Z_D(t)$ under  assumption that $D$ is a bounded Lipschitz domain, in complete analogy to the R. Brown result, \cite{B}.


\begin{theorem}
\label{main}
  Let $D \subset \Rd$, $d \ge 2$, be a bounded Lipschitz domain. Let $|D|$ denote $d$-dimensional Lebesgue measure of $D$ and $\cH(\partial{D})$ denote the $(d-1)$-dimensional Hausdorff measure of $\partial{D}$. For any $0<\alpha < 2$, the partition function of the symmetric $\alpha$-stable process in $D$ satisfies
  \begin{gather}
    t^{d/\alpha}Z_D(t)= C_1|D|-C_2 \cH(\partial{D}) t^{1/\alpha}+o(t^{1/\alpha}),
  \end{gather}
  where
  \begin{gather}
    C_1= p_1(0) = \frac{\omega_d \Gamma(d/\alpha)}{(2\pi)^d\alpha}
    \end{gather}
    and 
    \begin{gather}
    C_2=\int_0^\infty r_H(1,(x_1,0,\dots,0),(x_1,0,\dots,0)) \, dx_1.
  \end{gather}
Here, $$H = \{(x_1,\ldots,x_d) \in \Rd: \, x_1 > 0\}=\mathbb{R}^{d}_{+}$$ is the upper half-space of $\mathbb{R}^{d}$ and 
\begin{gather} r_H(t, x, y)= E^x(\tau_H<t;p(t-\tau_H,X(\tau_H),y)),
\end{gather} 
is as in (\ref{rDtxy}).
\end{theorem}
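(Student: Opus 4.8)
The plan is to study the remainder $u_D(t):=\int_D r_D(t,x,x)\,dx$. Since $p(t,x,x)=p_1(0)t^{-d/\alpha}$, equation \eqref{p-rd} gives $Z_D(t)=C_1|D|t^{-d/\alpha}-u_D(t)$, so the theorem is equivalent to the asymptotics $u_D(t)=C_2\,\cH(\partial D)\,t^{(1-d)/\alpha}+o(t^{(1-d)/\alpha})$ as $t\to0$. The basic tool is the scaling identity coming from self-similarity of $X_t$ and dilation invariance of half-spaces: for any open $\Omega$ and any $c>0$, $r_\Omega(t,x,y)=c^{-d}\,r_{c^{-1}\Omega}(c^{-\alpha}t,c^{-1}x,c^{-1}y)$, which with $c=t^{1/\alpha}$ reads $t^{d/\alpha}r_\Omega(t,x,x)=r_{t^{-1/\alpha}\Omega}(1,t^{-1/\alpha}x,t^{-1/\alpha}x)$. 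Two elementary bounds from \eqref{rDtxy} will be used repeatedly: $r_\Omega(1,w,w)\le p_1(0)$, and, since $|X_{\tau_\Omega}-w|\ge\dist(w,\partial\Omega)$ at the exit time while $p_s(z)\le c\,s|z|^{-d-\alpha}$, also $r_\Omega(1,w,w)\le c\,\dist(w,\partial\Omega)^{-d-\alpha}$; hence $r_H(1,(u,0,\dots,0),(u,0,\dots,0))\le g(u):=\min\{p_1(0),c\,u^{-d-\alpha}\}$, which is integrable on $(0,\infty)$ since $d+\alpha>1$, so $C_2<\infty$. A change of variables $u=t^{-1/\alpha}r$ further gives $\int_0^\delta r_H(t,(r,0,\dots,0),(r,0,\dots,0))\,dr\sim C_2\,t^{(1-d)/\alpha}$ as $t\to0$, the model computation behind the theorem.

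The contribution of $u_D(t)$ away from $\partial D$ is negligible: if $\dist(x,\partial D)\ge\delta$ then $|X_{\tau_D}-x|\ge\delta$ on $\{\tau_D<t\}$, so $r_D(t,x,x)\le c\,t\,\delta^{-d-\alpha}$ and $\int_{\{\dist(\cdot,\partial D)\ge\delta\}}r_D(t,x,x)\,dx=O(t)=o(t^{(1-d)/\alpha})$, using $d\ge2$. It thus suffices to treat the collar $D_\delta=\{x\in D:\dist(x,\partial D)<\delta\}$ for one fixed small $\delta$. Cover $\partial D$ by finitely many sets $U_1,\dots,U_N$ in which, after a rigid motion, $D\cap U_j=\{x_1>\gamma_j(x')\}\cap U_j$ with $\gamma_j$ $L$-Lipschitz; fix a subordinate partition of unity $\{\psi_j\}$ with $\sum_j\psi_j\equiv1$ on $D_\delta$. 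In each chart use $x=(\gamma_j(x')+r,x')$, $dx=dr\,dx'$, for which $r/\sqrt{1+L^2}\le\dist(x,\partial D)\le r$. Applying the scaling identity, translating by $q=(\gamma_j(x'),x')\in\partial D$, and substituting $r=t^{1/\alpha}\sqrt{1+|\nabla\gamma_j(x')|^2}\,\sigma$, one obtains for a.e.\ $x'$ (where $\nabla\gamma_j(x')$ exists, by Rademacher's theorem)
\[
  t^{(d-1)/\alpha}\!\int_0^\delta r_D\bigl(t,(\gamma_j(x')+r,x'),(\gamma_j(x')+r,x')\bigr)\,dr=\sqrt{1+|\nabla\gamma_j(x')|^2}\int_0^{M_t}r_{\Omega_t}\bigl(1,z(\sigma),z(\sigma)\bigr)\,d\sigma,
\]
where $\Omega_t:=t^{-1/\alpha}(D-q)$, the point $z(\sigma):=(\sqrt{1+|\nabla\gamma_j(x')|^2}\,\sigma,0,\dots,0)$ does not depend on $t$, and $M_t:=\delta\,t^{-1/\alpha}/\sqrt{1+|\nabla\gamma_j(x')|^2}\to\infty$.

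Two limits now finish the argument. First, because $\gamma_j$ is $L$-Lipschitz and differentiable at $x'$, the dilated domains $\Omega_t$ converge — locally uniformly through their defining graphs — to the half-space $H_q$ tangent to $\partial D$ at $q$, on whose boundary $z(\sigma)$ sits at distance exactly $\sigma$; granting the stability estimate $r_{\Omega_t}(1,z(\sigma),z(\sigma))\to r_{H_q}(1,z(\sigma),z(\sigma))=r_H(1,(\sigma,0,\dots,0),(\sigma,0,\dots,0))$ (the crux, discussed below), the uniform majorant $r_{\Omega_t}(1,z(\sigma),z(\sigma))\le g(\dist(z(\sigma),\partial\Omega_t))\le g(\sigma/\sqrt{1+L^2})$ lets dominated convergence in $\sigma$ give $\int_0^{M_t}r_{\Omega_t}(1,z(\sigma),z(\sigma))\,d\sigma\to\int_0^\infty r_H(1,(\sigma,0,\dots,0),(\sigma,0,\dots,0))\,d\sigma=C_2$. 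Thus the left-hand side above tends to $C_2\sqrt{1+|\nabla\gamma_j(x')|^2}$ for a.e.\ $x'$, and is bounded there by the constant $(1+L^2)\int_0^\infty g$; a second dominated-convergence step on each chart (the continuous factor $\psi_j(\gamma_j(x')+r,x')\to\psi_j(q)$ being absorbed, legitimate because $d+\alpha>2$ makes $t^{(d-1)/\alpha}\int_0^\delta r\,r_D(t,\cdot,\cdot)\,dr\to0$), together with $\int\sqrt{1+|\nabla\gamma_j(x')|^2}\,dx'=\cH(U_j\cap\partial D)$ and summation over $j$ using $\sum_j\psi_j\equiv1$ on $\partial D$, yields $t^{(d-1)/\alpha}\int_{D_\delta}r_D(t,x,x)\,dx\to C_2\,\cH(\partial D)$, which with the negligible far part is the theorem.

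I expect the stability estimate $r_{\Omega_t}(1,z,z)\to r_{H_q}(1,z,z)$ under the blow-up $\Omega_t\to H_q$ to be the main obstacle: since $\alpha$-stable paths jump, continuity of exit times cannot be quoted directly, and $\Omega_t$ converges only through its Lipschitz boundary graph. The route I would take is squeezing: on a ball $B=B(z,R)$, local uniform convergence of the graphs gives $H^+\cap B\subset\Omega_t\subset H^-\cup B^{c}$ for all small $t$, where $H^\pm$ are the half-spaces parallel to $H_q$ with boundary $\partial H_q$ shifted by $\mp\varepsilon$ along the normal; domain monotonicity of the killed kernels ($D_1\subset D_2\Rightarrow r_{D_1}\ge r_{D_2}$) together with the standard monotone convergence $r_{H^+\cap B}\downarrow r_{H^+}$ and $r_{H^-\cup B^{c}}\uparrow r_{H^-}$ as $R\to\infty$ (valid because half-spaces have regular boundary) forces $r_{H^-}(1,z,z)\le\liminf_{t\to0}r_{\Omega_t}(1,z,z)\le\limsup_{t\to0}r_{\Omega_t}(1,z,z)\le r_{H^+}(1,z,z)$, and letting $\varepsilon\to0$ with the continuity of $\sigma'\mapsto r_H(1,(\sigma',0,\dots,0),(\sigma',0,\dots,0))$ collapses this to the desired limit. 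The remaining points — the finite cover, the bound $r/\sqrt{1+L^2}\le\dist(x,\partial D)\le r$, the absorption of $\psi_j$, and the $L^1$-majorants — are routine once this comparison is secured.
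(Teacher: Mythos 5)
Your proof is correct but takes a genuinely different route. Both you and the paper analyze $\int_D r_D(t,x,x)\,dx$ and both ultimately compare $r_D$ near $\partial D$ with $r$ for a tangent half-space, but the machinery differs substantially. The paper adopts R.~Brown's quantitative $(\varepsilon,r)$-good set decomposition of the boundary (Lemmas \ref{hausdorff}, \ref{exists}), sandwiches $r_D$ and $r_{H^*(x)}$ between inner and outer cones $I_r(p(x))$, $U_r^c(p(x))$, and the main technical work is Proposition \ref{rI-rU}, which gives an explicit rate $\varepsilon^{1-\alpha/2}\vee\sqrt\varepsilon$ for $r_{I_r}-r_{U_r^c}$ via the Ikeda--Watanabe formula and half-space Poisson kernel estimates; the integral over the collar is converted to $\cH(\partial D)\int f_H$ through the Minkowski-content / Riemann--Stieltjes argument of Proposition \ref{lipint}. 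You instead cover $\partial D$ by Lipschitz graph charts with a partition of unity, invoke Rademacher to blow up at a.e.\ boundary point, and replace the cone comparison by a qualitative squeeze $r_{H^+\cap B(z,R)}\ge r_{\Omega_t}\ge r_{H^-\cup B(z,R)^c}$ plus two applications of dominated convergence. Your squeeze does work: domain monotonicity gives the inequalities, $r_{H^+\cap B}\downarrow r_{H^+}$ is immediate from increasing sets, and $r_{H^-\cup B^c}\uparrow r_{H^-}$ follows because $\tau_{H^-\cup B(z,R)^c}\downarrow\tau_{H^-}$ a.s.\ (indeed $=\tau_{H^-}$ for $R$ large, random) once one uses $P^z(X(\tau_{H^-})\in\partial H^-)=0$ -- a fact the paper also quotes via the outer cone condition from \cite{Bo}; and the continuity of $\sigma\mapsto f_H(1,\sigma)$ needed at the final $\varepsilon\to0$ step is exactly the paper's Lemma \ref{rDlipschitz2}. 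What your approach buys is the avoidance of the long Proposition \ref{rI-rU} (the bulk of the paper), replaced by softer, rate-free measure-theoretic arguments; what the paper's buys is explicit quantitative control, potentially usable for a remainder estimate rather than a bare $o(t^{1/\alpha})$. Two small slips of wording aside ($z(\sigma)$ does not lie on $\partial H_q$ but at distance $\sigma$ from it; the $\psi_j$-absorption estimate should split the $r$-integral at $r=t^{1/\alpha}$ to see that it is $O(t^{(2-d)/\alpha})$, hence $t^{(d-1)/\alpha}$ times it is $O(t^{1/\alpha})\to0$), your outline is sound.
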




It is now well known (see \cite{Iv}, \cite{Kuz}, \cite{MckSing}, \cite{Melrose}, \cite{RSS}
and references therein) that  in the case 
of the Brownian motion (the Laplacian),
the second term in Weyl's asymptotics for $N(\lambda)$  is also given by the surface area of 
 the domain--at least in the case of smooth domains. In fact, 

\begin{equation}\label{wf1}
N(\lambda)= {C_1 |D|}\,\lambda^{d/2}-{C_2}|\partial{D}|\lambda^{\frac{d-1}{2}}+o(\lambda^{\frac{d-1}{2}}),
\end{equation}
where $C_1$ and $C_2$ are explicit constants 
depending only on $d$.   

The arguments used to obtain such results  employ tools from the
theory of the wave equation. An interesting and challenging problem is
to develop the wave techniques in the case of the fractional Laplacian
to obtain similar results for the counting function of stable
processes. To the best of our knowledge the ``wave group" corresponding
to these operators has not been studied before. An alternative approach
would be to find a probabilistic (heat equation) proof for (\ref{wf1})
and then try to adapt such arguments to stable processes. A success
with either approach is likely to lead to applications that will be of independent
interest.

\section{Preliminaries}
In this section we present several facts concerning symmetric
$\alpha$-stable processes and recall several geometric properties of
Lipschitz domains which will be needed in the proof of our main result,
Theorem \ref{main}. These geometric facts, and notation, for Lipschitz
domains are standard and follow \cite{B}.

The ball in $\Rd$ with center at $x$ and radius $r$, $\{y \in \Rd: \, |x -
y| < r\}$, will be denoted by $B(x,r)$. We will use $\delta_D(x)$ to
denote the distance from the point $x$ to the boundary, $\partial D$,
of $D$. That is, $\delta_D(x)=\dist(x, \partial D)$. Throughout the
paper, we will use $c$ to denote positive constants that depend (unless
otherwise explicitly stated) only on $d$ and $\alpha$ but whose value
may change from line to line. 

The L\'evy measure of the stable processes $X_t$ will be denoted by $\nu$.  
Its density, which we will just write as $\nu(x)$, is given by 
\begin{equation}\label{density}
\nu(x) = \frac{{\mathcal{A}}_{d,-\alpha}}{|x|^{d + \alpha}},
\end{equation}
where ${\mathcal{A}}_{d,\gamma}=
\Gamma((d-\gamma)/2)/(2^{\gamma}\pi^{d/2}|\Gamma(\gamma/2)|)$. 
We will need the following bound on the transition probabilities
 of the process $X_t$ which can be found in   \cite{Z}.
For all $x, y\in \Rd$ and $t>0$, 
\begin{equation}
\label{ptxy}
p(t,x,y) \le c \left(\frac{t}{|x - y|^{d + \alpha}} \wedge \frac{1}{t^{d/\alpha}}\right).
\end{equation}

The scaling properties from (\ref{kernelbound1}) 
of $p_t(x)$ are inherited by the kernels $p_D$ and $r_D$. Namely, 
$$
p_D(t,x,y) = \frac{1}{t^{d/\alpha}} \,  p_{D/t^{1/\alpha}}\left(1,\frac{x}{t^{1/\alpha}},\frac{y}{t^{1/\alpha}}\right),
$$
and 

\begin{equation}
\label{scalingr}
r_D(t,x,y) = \frac{1}{t^{d/\alpha}} \, r_{D/t^{1/\alpha}}\left(1,\frac{x}{t^{1/\alpha}},\frac{y}{t^{1/\alpha}}\right).
\end{equation}
Also, both $p_D$ and $r_D$ are symmetric.  That is, $p_D(t,x,y) = p_D(t,y,x)$
 and $r_D(t,x,y) = r_D(t,y,x)$.    The Green function for the process $X_t$ in the open set $D\subset \Rd$ will be denoted by 
 $G_D(x, y)$. In fact, this can be written in terms of the transition probabilities as 
$$
G_D(x,y) = \int_0^{\infty} p_D(t,x,y) \, dt, \quad x, y \in \Rd.
$$ 

We have the following estimates for $r_D$. 
\begin{lemma}\label{rD}
Let $D \subset \Rd$ be an open set. For any $x,y \in D$ we have
  \begin{gather}
    r_D(t,x,y)\leq C\left( \frac{t}{\delta_D^{d+\alpha}(x)} \wedge t^{-d/\alpha} \right).
  \end{gather}
\end{lemma}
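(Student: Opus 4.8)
The plan is to estimate $r_D(t,x,y)$ by splitting the expectation defining it in \eqref{rDtxy} according to how much time has elapsed before exiting $D$, and then using the two competing bounds on $p(t,x,y)$ from \eqref{ptxy}. Recall that
\[
r_D(t,x,y)=E^x\bigl(\tau_D<t;\,p(t-\tau_D,X(\tau_D),y)\bigr).
\]
For the easy bound $r_D(t,x,y)\le c\,t^{-d/\alpha}$, I would simply use that $p(t-\tau_D,X(\tau_D),y)\le c\,(t-\tau_D)^{-d/\alpha}$ is not uniformly bounded as $\tau_D\uparrow t$, so instead I'd compare $r_D$ with the full kernel: since $r_D(t,x,y)=p(t,x,y)-p_D(t,x,y)\le p(t,x,y)\le c\,t^{-d/\alpha}$ by \eqref{p-rd} and \eqref{ptxy}. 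That disposes of the second term in the minimum immediately.

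The substantive part is the bound $r_D(t,x,y)\le c\,t\,\delta_D^{-(d+\alpha)}(x)$. Here I would exploit that on the event $\{\tau_D<t\}$ the process has jumped (or moved continuously to) a point $X(\tau_D)\notin D$, so $|X(\tau_D)-x|\ge \delta_D(x)$, and hence also we need to control $|X(\tau_D)-y|$; but since $y\in D$ is arbitrary this is awkward, so the cleaner route is to bound $p(t-\tau_D,X(\tau_D),y)$ using the \emph{first} alternative in \eqref{ptxy} only when $|X(\tau_D)-y|$ is large, and the second alternative otherwise. Actually the most robust approach: write $r_D(t,x,y)\le E^x(\tau_D<t;\,\sup_{s\le t}\,\|p(s,\cdot,\cdot)\|)$ is too lossy. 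Instead I would proceed as in \cite{BK}: integrate out $y$-dependence is not needed since we want a pointwise bound; rather, use the Ikeda--Watanabe formula to express the exit distribution via the L\'evy measure $\nu$ and the Green function $G_{D\cap B}$ of a suitable ball, giving
\[
r_D(t,x,y)\le P^x(\tau_{B(x,\delta_D(x))}<t)\cdot\sup_{s<t,\,z}p(s,z,y)
\]
is again too lossy in $y$.

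The right bookkeeping, which I expect to be the main obstacle, is to handle the region where $t-\tau_D$ is small (so the kernel $p(t-\tau_D,X(\tau_D),y)$ is large) against the fact that in that regime $\tau_D$ is close to $t$ and $X(\tau_D)$ must have left $D$ — i.e. traveled at least $\delta_D(x)$ from $x$ — which by \eqref{ptxy} costs a factor comparable to $t\,\delta_D^{-(d+\alpha)}$ in probability. Concretely, I would first reduce to the half-time: split $\{\tau_D<t\}=\{\tau_D\le t/2\}\cup\{t/2<\tau_D<t\}$. On $\{\tau_D\le t/2\}$ we have $t-\tau_D\ge t/2$, so $p(t-\tau_D,X(\tau_D),y)\le c\,(t/2)^{-d/\alpha}\le c\,t^{-d/\alpha}$, and then $r_D$ restricted there is at most $c\,t^{-d/\alpha}P^x(\tau_D\le t/2)\le c\,t^{-d/\alpha}P^x(\sup_{s\le t/2}|X_s-x|\ge\delta_D(x))\le c\,t^{-d/\alpha}\cdot\frac{c\,t}{\delta_D^{d+\alpha}(x)}\cdot\frac{1}{t^{-d/\alpha}}$ — wait, the standard tail estimate $P^x(\tau_{B(x,r)}<t)\le c\,t\,r^{-\alpha}$ would give $c\,t^{1-d/\alpha}r^{-\alpha}$, not the claimed form, so one must be more careful and keep the full kernel. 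On $\{t/2<\tau_D<t\}$, I would use the Markov/strong Markov property at time $t/2$: on this event $X_{t/2}\in D$ still, and then apply the bound to the remaining time, or more directly use $r_D(t,x,y)=p(t,x,y)-p_D(t,x,y)$ together with the semigroup identity $p_D(t,x,y)=\int_D p_D(t/2,x,z)p_D(t/2,z,y)\,dz$ versus $p(t,x,y)=\int p(t/2,x,z)p(t/2,z,y)\,dz$, so that
\[
r_D(t,x,y)=\int_{D^c}p(t/2,x,z)p(t/2,z,y)\,dz+\int_D\bigl(p(t/2,x,z)p(t/2,z,y)-p_D(t/2,x,z)p_D(t/2,z,y)\bigr)dz,
\]
and bound the first integral using $z\in D^c\Rightarrow|x-z|\ge\delta_D(x)$ hence $p(t/2,x,z)\le c\,t\,\delta_D^{-(d+\alpha)}(x)$ (by the first branch of \eqref{ptxy}, valid precisely because $|x-z|^{d+\alpha}\ge\delta_D^{d+\alpha}(x)$ and $t\le\delta_D^\alpha(x)$ up to constants — if $t>\delta_D^\alpha(x)$ the claimed bound is weaker than $t^{-d/\alpha}$ and follows from that branch), while $\int p(t/2,z,y)\,dz\le 1$. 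The second integral telescopes into two copies of $r_D(t/2,\cdot,\cdot)$ times bounded mass and is handled by an iteration/induction on the scale. The only genuine difficulty is making this telescoping rigorous and checking the two cases $t\lessgtr\delta_D^\alpha(x)$ separately; once that split is in place, each piece reduces to the elementary kernel bound \eqref{ptxy} and the total-mass bound $\int_{\Rd}p(s,z,y)\,dz=1$, and the constant $C$ depends only on $d,\alpha$ as required.
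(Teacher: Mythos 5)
Your disposal of the easy bound $r_D(t,x,y)\le p(t,x,y)\le c\,t^{-d/\alpha}$ via $r_D=p-p_D$ and \eqref{ptxy} is correct. But the substantive bound $r_D(t,x,y)\le c\,t/\delta_D^{d+\alpha}(x)$ is not established by your argument, and there is a genuine gap in the route you end up proposing.

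The key insight you miss is simply the \emph{symmetry} $r_D(t,x,y)=r_D(t,y,x)$, recorded in the paper right after \eqref{scalingr}. Writing
\[
r_D(t,x,y)=r_D(t,y,x)=E^y\bigl(\tau_D<t;\,p(t-\tau_D,X(\tau_D),x)\bigr),
\]
the kernel now involves the distance $|X(\tau_D)-x|$. On $\{\tau_D<t\}$ one has $X(\tau_D)\notin D$, hence $|X(\tau_D)-x|\ge\delta_D(x)$; the first branch of \eqref{ptxy} then gives $p(t-\tau_D,X(\tau_D),x)\le c\,(t-\tau_D)/|X(\tau_D)-x|^{d+\alpha}\le c\,t/\delta_D^{d+\alpha}(x)$ pointwise on this event, and taking expectations finishes the proof in two lines. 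You explicitly note the awkwardness of controlling $|X(\tau_D)-y|$, but the resolution — move the roles of $x$ and $y$ by symmetry so that the kernel sees $x$ rather than $y$ — is exactly what you needed and did not use. This is the content of Lemma 2.1 of \cite{BK}, which the paper cites for this lemma.

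By contrast, the Chapman--Kolmogorov splitting you propose does not close. Telescoping the $D$-integral produces
\[
\int_D r_D(t/2,x,z)\,p(t/2,z,y)\,dz+\int_D p_D(t/2,x,z)\,r_D(t/2,z,y)\,dz,
\]
and the first of these equals, after undoing Chapman--Kolmogorov in $z$, precisely $E^x\bigl(\tau_D<t/2;\,p(t-\tau_D,X(\tau_D),y)\bigr)$, i.e.\ a subprobabilistic piece of the very quantity $r_D(t,x,y)$ you are trying to bound — nothing has been gained, and no self-improving iteration with a summable geometric factor emerges. The second term can be salvaged (again by undoing Chapman--Kolmogorov and then using $|X(\tau_D)-x|\ge\delta_D(x)$), but that manoeuvre is just the symmetry argument in disguise, applied after an unnecessary detour. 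So the ``iteration/induction on the scale'' you gesture at is not a rigorous step; it is the place where the proof needs the actual idea, and that idea is the symmetry of $r_D$.
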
 
This Lemma follows from  \cite{BK}, Lemma 2.1.  
\begin{lemma}
\label{rDlipschitz1}
Let $D \subset \Rd$ be an open nonempty set. Fix $\varepsilon > 0$. For any $y \in D$, and $x,z \in D$ such that $\delta_D(x) > \varepsilon$, $\delta_D(z) > \varepsilon$, we have
$$
|r_D(1,x,y) - r_D(1,z,y)| \le c(\varepsilon) |x - z|.
$$
Here $c(\varepsilon)$ depends on $\varepsilon$, $d$, $\alpha$.
\end{lemma}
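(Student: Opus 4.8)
\textbf{Proof plan for Lemma \ref{rDlipschitz1}.}

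The plan is to exploit the representation (\ref{p-rd}), namely $r_D(1,x,y) = p(1,x,y) - p_D(1,x,y)$, and to differentiate in the first spatial variable. Since the heat kernel $p(1,\cdot,y) = p_1(\cdot - y)$ is a fixed Schwartz function, it is globally Lipschitz with a universal constant, so the real content is a Lipschitz estimate for $p_D(1,\cdot,y)$ on the region where the first argument stays at distance more than $\varepsilon$ from $\partial D$. First I would recall that for $x$ with $\delta_D(x) > \varepsilon$ the killed semigroup is smooth in $x$: one writes, for $0 < s < 1$, the Chapman--Kolmogorov identity
\begin{gather}
  p_D(1,x,y) = \int_D p_D(s,x,w)\, p_D(1-s,w,y)\, dw,
\end{gather}
and then, since $X_s$ starting from $x$ stays inside $B(x,\varepsilon/2) \subset D$ with probability bounded below on a time interval depending only on $\varepsilon$, one can replace $p_D(s,x,w)$ by $p(s,x,w) = p_s(x-w)$ up to an error controlled via Lemma \ref{rD}. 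The point is that $p_s(x-w)$ is a smooth function of $x$ with gradient bounded, for $x$ ranging over the $\varepsilon/2$-interior, by a constant $c(\varepsilon)$; differentiating under the integral sign and using $\int_D p_D(1-s,w,y)\,dw \le 1$ then yields $|\nabla_x p_D(1,x,y)| \le c(\varepsilon)$.

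More carefully, I would fix $s = s(\varepsilon) \in (0,1)$ small and split
\begin{gather}
  p_D(1,x,y) = \int_{B(x,\varepsilon/2)} p_D(s,x,w)\, p_D(1-s,w,y)\, dw + \int_{D \setminus B(x,\varepsilon/2)} p_D(s,x,w)\, p_D(1-s,w,y)\, dw.
\end{gather}
On $B(x,\varepsilon/2)$ we use $p_D(s,x,w) = p_s(x-w) - r_D(s,x,w)$ with $r_D(s,x,w) \le C s \,\delta_D^{-d-\alpha}(x) \le C s \varepsilon^{-d-\alpha}$ by Lemma \ref{rD}; on $D \setminus B(x,\varepsilon/2)$ we use the crude bound $p_D(s,x,w) \le p_s(x-w) \le c\, s\, |x-w|^{-d-\alpha} \le c\, s\, \varepsilon^{-d-\alpha}$ from (\ref{ptxy}). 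Thus, up to a term of size $c(\varepsilon)\,s$ (which is itself smooth, indeed constant, in $x$ — or more precisely can be reabsorbed), $p_D(1,x,y)$ agrees with $\int_{B(x,\varepsilon/2)} p_s(x-w)\, p_D(1-s,w,y)\, dw$, an explicit convolution-type integral whose $x$-derivative is $\int_{B(x,\varepsilon/2)} \nabla p_s(x-w)\, p_D(1-s,w,y)\, dw$ plus boundary terms from the moving domain of integration, all bounded by $c(\varepsilon)$ since $\|\nabla p_s\|_\infty = s^{-1/\alpha}\|\nabla p_1\|_\infty s^{-d/\alpha} \le c(\varepsilon)$ and $\int p_D(1-s,w,y)\,dw \le 1$. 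Integrating the gradient bound along the segment from $x$ to $z$ (which one may take to lie in the $\varepsilon/2$-interior, since both endpoints do and the statement can first be reduced to $|x-z| < \varepsilon/2$, the case $|x-z| \ge \varepsilon/2$ being trivial from the uniform bound $r_D(1,\cdot,\cdot) \le C$) gives the claim.

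The main obstacle I anticipate is the bookkeeping around the moving domain $B(x,\varepsilon/2)$ and making the ``replace $p_D(s,x,w)$ by $p_s(x-w)$'' step genuinely produce an $x$-Lipschitz error rather than merely a small error: the cleanest fix is to not truncate the $w$-integral at all but instead write $p_D(1,x,y) = \int_D p(s,x,w) p_D(1-s,w,y)\,dw - \int_D r_D(s,x,w) p_D(1-s,w,y)\,dw$, bound the second integral's $x$-variation directly from a Lipschitz-in-$x$ estimate for $r_D(s,x,w)$ on the $\varepsilon$-interior (which follows by the same semigroup argument, or by a covering/iteration since $s$ is fixed), and differentiate the first integral freely since $p(s,x,w) = p_s(x-w)$ is globally smooth with $x$-gradient in $L^1_w$. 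Alternatively, and perhaps most efficiently, one can avoid $s$ altogether: by analyticity of the semigroup on $L^2$ and the fact that $p_D(1,\cdot,y) = P^{D}_{1/2}\big(p_D(1/2,\cdot,y)\big)$ maps into the domain of the generator, hence (via (\ref{kernelbound1}) and Sobolev-type embedding for these operators, or directly via the gradient estimate $|\nabla_x p_D(t,x,y)| \le c\, t^{-1/\alpha} \delta_D(x)^{-1}\wedge\cdots$ type bounds available in the literature) one gets the pointwise gradient control on the interior. Either way the lemma reduces to a standard interior-regularity statement for the stable heat kernel, and I would present whichever route keeps the constants' dependence on $\varepsilon$ most transparent.
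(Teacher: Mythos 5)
Your reduction ``the real content is a Lipschitz estimate for $p_D(1,\cdot,y)$ in the interior'' is where the plan goes off the rails, because the decomposition $r_D = p - p_D$ throws away the main structural advantage of $r_D$: by \eqref{rDtxy}, $r_D(1,x,y)$ is an expectation of the \emph{free} kernel $p(1-\tau_D, X(\tau_D), x)$, and the spatial argument $X(\tau_D)$ is automatically at distance $\ge \delta_D(x) > \varepsilon$ from $x$, so the near-diagonal singularity never enters. The paper's proof exploits exactly this. Using the subordination formula \eqref{subordinator} one computes $\partial p_1^{(d)}/\partial x_1 = -2\pi x_1\, p_1^{(d+2)}(\overline{x})$, hence $|\nabla p_1(v)| \le c|v|^{-d-1-\alpha}$; then, since $p_1$ is radial, the mean value theorem gives $|p_1(a) - p_1(b)| \le c\,(|a|\wedge|b|)^{-d-1-\alpha}\,\bigl||a|-|b|\bigr|$. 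Feeding this into the scaled formula $r_D(1,x,y) = E^y\bigl(\tau_D<1;\ (1-\tau_D)^{-d/\alpha}\,p_1\bigl((x-X(\tau_D))/(1-\tau_D)^{1/\alpha}\bigr)\bigr)$ and using $|x-X(\tau_D)|\wedge|z-X(\tau_D)| \ge \varepsilon$ gives the lemma directly, with no Chapman--Kolmogorov step.

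By contrast, your route has a genuine gap at its ``cleanest fix'': the error term $\int_D r_D(s,x,w)\,p_D(1-s,w,y)\,dw$ needs a Lipschitz-in-$x$ bound for $r_D(s,\cdot,w)$ on the $\varepsilon$-interior, uniformly in $w$, which by the scaling \eqref{scalingr} is precisely a rescaled version of the statement being proved; ``the same semigroup argument'' just regresses to time $s^2, s^3,\ldots$, and the ``covering/iteration'' is not developed. The size bound $c(\varepsilon)s$ you do establish for the error is not a Lipschitz bound --- $s=s(\varepsilon)$ is fixed while $|x-z|$ may be arbitrarily small, so it cannot be absorbed into $c(\varepsilon)|x-z|$. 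The fallback appeals to semigroup analyticity or gradient estimates ``from the literature'' would indeed work, but they are not carried out and go well beyond the elementary means the paper uses. Your first instinct --- differentiate in the spatial variable --- is the right one; the fix is to differentiate directly under the expectation defining $r_D$, where the argument of $p$ is uniformly away from the diagonal, rather than routing the argument through $p_D$.
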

\begin{proof}
Recall  that
\begin{equation}\label{subordinator}
p_1(x) = \int_0^{\infty} \frac{1}{(4 \pi u)^{d/2}} \exp\left(\frac{-|x|^2}{4 u} \right) g_1(u) \, du,
\end{equation}
where $g_t: (0,\infty) \to (0,\infty)$ is the density of the $\alpha$-stable 
subordinator whose Laplace transform is given by 
$\int_0^{\infty} e^{-\lambda u} g_t(u) \, du = e^{-t \lambda^{\alpha/2}}$.

For the rest of the proof of this Lemma, let us denote $p_1$ by $p_1^{(d)}$ in order to stress the 
dependence on the dimension $d$.  Differentiating in (\ref{subordinator}), we get 
for any $x \in \Rd$,
$$
\frac{\partial{p_1^{(d)}}}{\partial{x_1}}(x) =
- 2 x_1 \int_0^{\infty} \frac{\pi}{(4 \pi u)^{(d + 2)/2}} \exp\left(\frac{-|x|^2}{4 u} \right) g_1(u) \, du =
- 2 x_1 \pi p_1^{(d + 2)}(\overline{x}),
$$
where $\overline{x} = (x,0,0) \in \mathbb{R}^{d + 2}$.

Since for any dimension $d$ we have (see (\ref{ptxy})),
$
p_1^{(d)}(x) \le c |x|^{-d - \alpha},
$
we get
$$
\left|\frac{\partial{p_1^{(d)}}}{\partial{x_1}}(x)\right| \le
c |x_1| |x|^{-d - 2 -\alpha} \le c |x|^{-d - 1 - \alpha}.
$$

Set $e_1 = (1,0,\ldots,0) \in \Rd$. Since the function $p_1^{(d)}$ is radial, $p_1^{(d)}(x) = p_1^{(d)}(|x| e_1)$. Hence, the mean--value theorem gives  
\begin{eqnarray*}
|p_1^{(d)}(x) - p_1^{(d)}(z)| &=& |p_1^{(d)}(|x| e_1) - p_1^{(d)}(|z| e_1)| \\
&=& \frac{\partial{p_1^{(d)}}}{\partial{x_1}}(\xi) ||x|e_1 - |z|e_1| \\
&\le& c |\xi|^{-d - 1 - \alpha} ||x| - |z|| \\
&\le& c ||x| \wedge |z||^{-d - 1 - \alpha} ||x| - |z||,
\end{eqnarray*}
where $\xi = (\xi_1,0,\ldots,0) \in \Rd$ is the point between $|x|e_1$ and $|z| e_1$. 
By the scaling of $p_1$, 
\begin{eqnarray*}
r_D(1,x,y) &=& E^y(\tau_D < 1; p(1 - \tau_D,X(\tau_D),x)) \\
&=& E^y\left(\tau_D < 1; \frac{1}{(1 - \tau_D)^{d/\alpha}} p_1^{(d)}\left(\frac{x - X(\tau_D)}{(1 - \tau_D)^{1/\alpha}}\right)\right).
\end{eqnarray*}
It follows that
\begin{eqnarray*}
&& |r_D(1,x,y) - r_D(1,z,y)| \\
&& = \left| E^y\left(\tau_D < 1; \frac{1}{(1 - \tau_D)^{d/\alpha}} 
p_1^{(d)}\left(\frac{x - X(\tau_D)}{(1 - \tau_D)^{1/\alpha}}\right) 
- p_1^{(d)}\left(\frac{z - X(\tau_D)}{(1 - \tau_D)^{1/\alpha}}\right) \right) \right| \\
&& \le c E^y\left(\tau_D < 1; \frac{1}{(1 - \tau_D)^{d/\alpha}} 
\left(\frac{|x - X(\tau_D)|}{(1 - \tau_D)^{1/\alpha}} \wedge 
\frac{|z - X(\tau_D)|}{(1 - \tau_D)^{1/\alpha}}\right)^{-d - 1 - \alpha} \right.\\
&& \quad \quad \quad \times \left. \left|\frac{|x - X(\tau_D)|}{(1 - \tau_D)^{1/\alpha}} - 
\frac{|z - X(\tau_D)|}{(1 - \tau_D)^{1/\alpha}}\right| \right)\\
&& \le c E^y(\tau_D < 1; (1 - \tau_D)) (\delta_D(x) \wedge \delta_D(z))^{- d - 1 - \alpha} |x - z| \\
&& \le c(\varepsilon) |x - z|,
\end{eqnarray*}
where we used our assumption that both $\delta_D(x)$ and  $\delta_D(z)$ are larger than $\varepsilon$. 
\end{proof}

As an immediate corollary of this lemma we obtain the following diagonal estimate for $r_D$. 
\begin{lemma}
\label{rDlipschitz2}
Let $D \subset \Rd$ be an open nonempty set. Fix $\varepsilon > 0$. 
For any $x,z \in D$ such that $\delta_D(x) > \varepsilon$, $\delta_D(z) > \varepsilon$, we have
$$
|r_D(1,x,x) - r_D(1,z,z)| \le c(\varepsilon) |x - z|.
$$
Here $c(\varepsilon)$ depends on $\varepsilon$, $d$, $\alpha$.
\end{lemma}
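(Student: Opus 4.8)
The plan is to reduce the diagonal estimate to the off-diagonal estimate of Lemma \ref{rDlipschitz1} by a single application of the triangle inequality, using the symmetry $r_D(1,u,v)=r_D(1,v,u)$ recorded above. Concretely, I would write
$$
|r_D(1,x,x) - r_D(1,z,z)| \le |r_D(1,x,x) - r_D(1,z,x)| + |r_D(1,z,x) - r_D(1,z,z)|.
$$
The first term is handled directly: apply Lemma \ref{rDlipschitz1} with the fixed point $y = x$ (this is legitimate since $x \in D$) and with the two points whose distance to $\partial D$ exceeds $\varepsilon$ being $x$ and $z$; this yields $|r_D(1,x,x) - r_D(1,z,x)| \le c(\varepsilon)\,|x - z|$.

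For the second term I would first use symmetry to rewrite $r_D(1,z,x) = r_D(1,x,z)$, so that this term equals $|r_D(1,x,z) - r_D(1,z,z)|$. Now Lemma \ref{rDlipschitz1} applies again, this time with the fixed point $y = z \in D$ and moving points $x, z$, bounding the term by $c(\varepsilon)\,|x - z|$. Adding the two estimates gives $|r_D(1,x,x) - r_D(1,z,z)| \le 2\,c(\varepsilon)\,|x - z|$, which is the assertion after relabelling the constant.

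There is essentially no obstacle here; the lemma is a genuine corollary. The only point that needs a moment's care is checking, in each invocation of Lemma \ref{rDlipschitz1}, that the fixed point $y$ (namely $x$, and then $z$) indeed lies in $D$ and that both moving points lie at distance greater than $\varepsilon$ from $\partial D$ — but this is precisely the standing hypothesis. The symmetry of $r_D$ is the one ingredient that is used beyond Lemma \ref{rDlipschitz1}, allowing us to convert a statement about perturbing the second spatial argument into the needed statement about perturbing the first.
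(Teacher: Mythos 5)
Your proof is correct and is essentially identical to the paper's own argument: the same triangle-inequality decomposition through the off-diagonal point $(z,x)$, the same use of the symmetry $r_D(1,z,x)=r_D(1,x,z)$, and the same two invocations of Lemma \ref{rDlipschitz1}. Nothing to add.
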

\begin{proof}
By the fact that $r_D(1,x,z) = r_D(1,z,x)$ and Lemma \ref{rDlipschitz1} we get
\begin{eqnarray*}
|r_D(1,x,x) - r_D(1,z,z)| &\le&
|r_D(1,x,x) - r_D(1,z,x)| + |r_D(1,x,z) - r_D(1,z,z)| \\
&\le& c(\varepsilon) |x - z|.
\end{eqnarray*}
\end{proof}

In the sequel we will need several facts concerning the {\it{$\alpha$-harmonic measure}} $P^x(X(\tau_D) \in \cdot)$ of an open nonempty set $D \subset \Rd$. 
We say that an open set $D \subset \Rd$ satisfies {\it{the outer cone condition}} if there exist constants $\eta = \eta(D)$, $R_0 = R_0(D)$ and a cone $C = \{x = (x_1,\ldots,x_d) \in \Rd: \, 0 < x_d, ||(x_1,\ldots,x_{d-1})|| < \eta x_d\}$ such that for every $Q \in \partial{D}$, there is a cone $C_Q$ with vertex $Q$, isometric with $C$ and satisfying $C_Q \cap B(Q,R_0) \subset D^c$.
It is well known that if $D$ is an open set satisfying the outer cone condition then
$$
P^x(\tau_D < \infty, X(\tau_D) \in \partial{D}) = 0, \quad x \in D.
$$
This fact is proved in \cite{Bo}, Lemma 6, for bounded open sets with the outer cone condition.  
However, by the same arguments as in the proof of Lemma 2.10 in \cite{KS}, it holds also for unbounded open sets.

In the sequel we will also use the {\it{Ikeda--Watanabe formula}} for the space-time $\alpha$-harmonic measure in terms of the Green function $G_D(x,y)$ (or the transition density $p_D(t,x,y)$) and the Levy measure of the process. This formula is from \cite{S}, Theorem 2.4.
\begin{proposition}
\label{IWprop}
Let $D$ be an open nonempty set and $A$  a Borel set such that $A \subset D^c \setminus \partial D$. Assume that $0 \le t_1 < t_2 < \infty$, $x \in D$. Then we have
\begin{equation}
\label{IWprop1}
P^x(t_1 < \tau_D < t_2, \, X(\tau_D) \in A) = 
\int_D \int_{t_1}^{t_2} p_D(s,x,y) \, ds \int_A \nu(y - z) \, dz \, dy.
\end{equation}
\end{proposition}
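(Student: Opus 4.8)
The plan is to obtain the formula from the L\'evy system (compensation formula) of the $\alpha$-stable process. Since $X_t$ is a pure-jump L\'evy process with jump intensity $\nu(z)\,dz$, one has the standard identity: for every non-negative Borel function $F$ on $(0,\infty)\times\Rd\times\Rd$ and every stopping time $\tau$,
\[
E^x\Big[\sum_{0<s\le\tau,\;X_{s-}\ne X_s}F(s,X_{s-},X_s)\Big]
=E^x\Big[\int_0^{\tau}\int_{\Rd}F(s,X_s,X_s+z)\,\nu(z)\,dz\,ds\Big].
\]
For a fixed time $\tau=t$ this is classical, and the version with a general stopping time follows by optional stopping of the associated local martingale together with monotone convergence; note that the sum on the left includes a jump occurring exactly at time $\tau$, which is the feature we shall need. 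Substituting $w=X_s+z$ and using the symmetry $\nu(-z)=\nu(z)$, the right-hand side can be rewritten as $E^x[\int_0^\tau\int_{\Rd}F(s,X_s,w)\,\nu(w-X_s)\,dw\,ds]$.

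I would then apply this with $\tau=\tau_D$ and $F(s,u,w)=\mathbf 1_{(t_1,t_2)}(s)\,\mathbf 1_A(w)$. The left-hand side collapses to a single term. Indeed, for $0<s<\tau_D$ we have $X_s\in D$, hence $X_s\notin D^c\supseteq A$ and the summand vanishes; moreover, since $A\subseteq D^c\setminus\partial D=\operatorname{int}(D^c)$ while the left limit $X_{\tau_D-}$ always lies in $\overline D$, the only way to have $X_{\tau_D}\in A$ is for $X$ to jump at time $\tau_D$ (a continuous exit would give $X_{\tau_D}=X_{\tau_D-}\in\partial D$, which is disjoint from $A$). Consequently
\[
\sum_{0<s\le\tau_D,\;X_{s-}\ne X_s}\mathbf 1_{(t_1,t_2)}(s)\,\mathbf 1_A(X_s)=\mathbf 1_{\{t_1<\tau_D<t_2\}}\,\mathbf 1_A(X_{\tau_D})
\]
pathwise, so the expectation of the left-hand side of the L\'evy identity is precisely $P^x(t_1<\tau_D<t_2,\,X(\tau_D)\in A)$.

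For the right-hand side I would use Tonelli's theorem (all integrands are non-negative), discard the time $\{s=\tau_D\}$, which is Lebesgue-null, so that the region of integration becomes $\{s<\tau_D\}$, and invoke the defining property of the killed transition density, namely $E^x[g(X_s);\,s<\tau_D]=\int_D p_D(s,x,y)\,g(y)\,dy$, applied with $g(y)=\int_A\nu(w-y)\,dw$ (finite for each $y\in D$, since $\operatorname{dist}(y,A)>0$ when $y$ belongs to the open set $D$ and $A\subseteq D^c$). This gives
\[
P^x(t_1<\tau_D<t_2,\,X(\tau_D)\in A)=\int_{t_1}^{t_2}\int_D p_D(s,x,y)\int_A\nu(w-y)\,dw\,dy\,ds.
\]
Renaming $w=z$, using the symmetry $\nu(z-y)=\nu(y-z)$, and once more rearranging the triple integral by Tonelli into the order displayed in the statement completes the proof.

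The step I expect to be most delicate is the rigorous use of the compensation formula at the random time $\tau_D$: one must be sure the terminal (exit) jump is counted, and that the singularity of $\nu$ at the origin causes no trouble. The first is exactly the geometric fact noted above --- $X$ cannot exit $D$ continuously into $A$ because $A$ is separated from $\partial D$ --- which is what turns the pathwise reduction of the jump sum into an exact identity rather than an inequality. The second can be dispatched, if one wishes to avoid any subtlety, by first proving the proposition for sets $A$ with $\operatorname{dist}(A,D)>0$ (on which $\nu(w-y)$ is bounded for $y\in D$, $w\in A$), and then exhausting a general Borel $A\subseteq\operatorname{int}(D^c)$ by $A\cap\{y:\operatorname{dist}(y,D)>1/n\}$ and passing to the limit by monotone convergence on both sides.
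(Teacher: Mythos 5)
Your argument is correct. Note, however, that the paper does not prove this proposition at all: it simply quotes it from \cite{S}, Theorem 2.4 (the identity going back to Ikeda and Watanabe \cite{IW}), so there is no in-paper proof to compare against. Your L\'evy-system (compensation formula) derivation is essentially the standard argument underlying that citation: the key observations --- that for $s<\tau_D$ one has $X_s\in D$ so no jump before the exit time can contribute, and that since $A\subset D^c\setminus\partial D=\mathrm{int}(D^c)$ while $X_{\tau_D-}\in\overline{D}$, the event $\{X_{\tau_D}\in A\}$ forces a genuine jump at $\tau_D$ --- are exactly what make the jump sum collapse to the indicator of the exit event, and the passage to $\int_D p_D(s,x,y)\,(\cdot)\,dy$ via Tonelli and the killed semigroup is routine. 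The only cosmetic imprecision is writing $F(s,X_s,X_s+z)$ rather than $F(s,X_{s-},X_{s-}+z)$ on the compensator side; since $X_{s-}=X_s$ for Lebesgue-a.e.\ $s$ almost surely (and your $F$ does not depend on the middle argument), this is immaterial. Your closing remarks on predictability of $\mathbf 1_{[0,\tau_D]}$, on the harmlessness of the singularity of $\nu$ (as $|y-z|\ge\delta_D(y)>0$ for $y\in D$, $z\in A$), and the optional truncation $\mathrm{dist}(A,D)>1/n$ address the only delicate points, so the proposal stands as a complete, self-contained proof.
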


By letting $t_2 \to \infty$ in (\ref{IWprop1}) we see that this formula 
also holds for  $t_2 = \infty$. With this and the fact that

$$G_D(x,y) = \int_0^{\infty} p_D(t,x,y) \, dt$$ we have
\begin{corollary}
\label{IWcorollary}
Let $D$ be an open nonempty set and $A$  a Borel set such that $A \subset D^c \setminus \partial D$, $x \in D$. Then we have
\begin{equation*}
P^x(\tau_D < \infty, X(\tau_D) \in A) = 
\int_D G_D(x,y) \int_A \nu(y - z) \, dz \, dy.
\end{equation*}
\end{corollary}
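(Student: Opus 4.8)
The plan is to deduce the corollary from Proposition \ref{IWprop} by taking $t_1=0$ and letting $t_2\uparrow\infty$, using monotone convergence on each side; this is exactly the passage to the limit alluded to in the sentence preceding the statement.

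First I would apply (\ref{IWprop1}) with $t_1=0$ and an arbitrary finite $t_2$, which gives
\begin{gather*}
P^x(0<\tau_D<t_2,\ X(\tau_D)\in A)=\int_D\int_0^{t_2}p_D(s,x,y)\,ds\int_A\nu(y-z)\,dz\,dy.
\end{gather*}
On the left-hand side, as $t_2\uparrow\infty$ the events $\{0<\tau_D<t_2,\ X(\tau_D)\in A\}$ increase to $\{0<\tau_D<\infty,\ X(\tau_D)\in A\}$, and this last event coincides with $\{\tau_D<\infty,\ X(\tau_D)\in A\}$: since $x$ lies in the open set $D$ we have $\tau_D>0$ $P^x$-almost surely, and in any case on $\{\tau_D=0\}$ one has $X(\tau_D)=x\in D$, which is disjoint from $A$. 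Hence, by continuity from below of the measure $P^x$, the left-hand side converges to $P^x(\tau_D<\infty,\ X(\tau_D)\in A)$. On the right-hand side, the integrand $p_D(s,x,y)\,\nu(y-z)$ is nonnegative, so $\int_0^{t_2}p_D(s,x,y)\,ds$ increases to $\int_0^\infty p_D(s,x,y)\,ds=G_D(x,y)$ as $t_2\uparrow\infty$, and the monotone convergence theorem shows that the right-hand side converges to $\int_D G_D(x,y)\int_A\nu(y-z)\,dz\,dy$. Equating the two limits yields the identity.

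There is essentially no obstacle here: the whole argument is a routine passage to the limit. The only two points worth a word are the set identity $\{0<\tau_D<\infty,\ X(\tau_D)\in A\}=\{\tau_D<\infty,\ X(\tau_D)\in A\}$, which ensures that no probability mass is lost in the limit on the left and which uses only $A\cap D=\emptyset$, and the systematic appeal to Tonelli's theorem to manipulate the iterated integrals, legitimate because every integrand is nonnegative. That the common value is finite then follows a posteriori, the left-hand side being a probability.
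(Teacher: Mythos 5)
Your proof is correct and follows exactly the route the paper indicates: apply Proposition \ref{IWprop} with $t_1=0$, let $t_2\to\infty$, and use monotone convergence together with $G_D(x,y)=\int_0^\infty p_D(s,x,y)\,ds$. The small extra care you take to identify $\{0<\tau_D<\infty,\,X(\tau_D)\in A\}$ with $\{\tau_D<\infty,\,X(\tau_D)\in A\}$ via $A\cap D=\emptyset$ is sound and only makes the passage to the limit more explicit than the paper's one-line remark.
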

 
We also recall here that 
\begin{equation}
\label{Poisson}
K_D(x,z) = \int_D G_D(x,y) \nu(y - z) \, dy
\end{equation}
denotes {\it{the Poisson kernel}} for the $\alpha$-stable symmetric process and the open set $D$.

As in the proof in \cite{B}, we need to divide the domain $D$ into a good and a bad set.  We recall several geometric facts about Lipschitz domains, most which come from \cite{B}.

\begin{definition}
Let $\varepsilon, r > 0$. We say that $G\subset\partial D$ is $(\varepsilon,r)$-good if for each point $p\in G$, the unit inner normal $\nu(p)$ exists and
  \begin{gather}
  \label{defgood}
    B(p,r)\cap\partial D\subset\left\{ x:|(x-p)\cdot \nu(p)|<\varepsilon|x-p| \right\}.
  \end{gather}
\end{definition}

Using this definition we can construct a good subset of the points near the boundary.
\begin{gather}\label{cG}
  \cG=\bigcup_{p\in G} \Gamma_r(p,\varepsilon),
\end{gather}
where $\Gamma_r(p,\varepsilon)$ is a truncated cone
\begin{gather}
\label{truncated}
  \Gamma_r(p,\varepsilon)=\left\{x:(x-p)\cdot \nu(p)>\sqrt{1-\varepsilon^2}|x-p|\right\}\cap B(p,r)
\end{gather}

Figure \ref{fig1} shows one of the cones $\Gamma_r(p,\varepsilon)$ together with the boundary of $D$ contained in $\left\{ x:|(x-p)\cdot \nu(p)|<\varepsilon|x-p| \right\}$.
\begin{figure}[ht]
  \begin{center}
\begin{tikzpicture}[scale=0.8]
  \draw[dashed] (0,0) circle (5);
  \draw[dashed] (-5,1) -- (5,-1)
  	(-5,-1) -- (5,1);
  \draw (1,5) -- (-1,-5)
  	(-1,5) -- (1,-5);
  \draw[->] (0,0) -- node[right,near end] {$\nu(p)$} (0,-3);
  \draw plot coordinates {(-6,0) (-5,0.8) (-4,-0.6) (-3,0.2) (-2,-0.3) (-1,0.1) (0,0) (1,0.15) (2,-0.2) (3,-0.6) (4,0.6) (5,-0.9) };
  \draw (-6,0) node[below] {$\partial D$}
  	(0,-4.5) node {$\Gamma_r(p,\varepsilon)$}
	(-5,-3) node {$D$}
	(0,0) node[above right] {$p$};
\end{tikzpicture}

  \end{center}
  \caption{Cones at a good point $p$.}
  \label{fig1}
\end{figure}
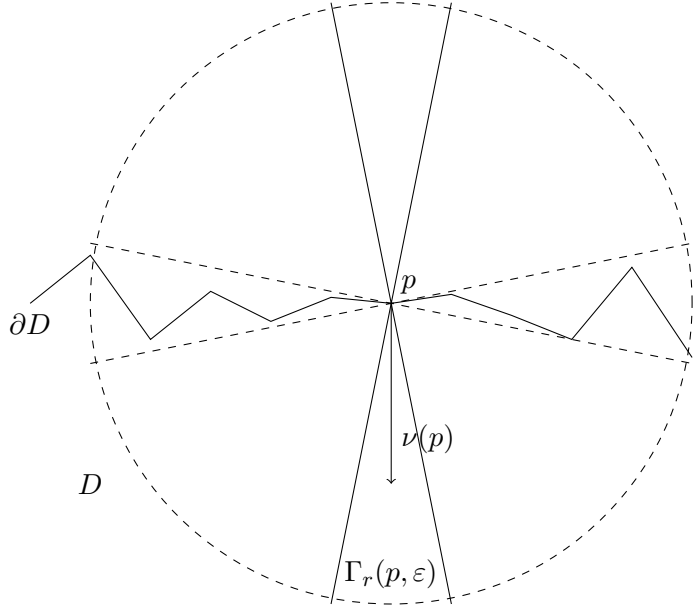

\begin{lemma}\label{hausdorff}
Let $0< \varepsilon < 1/2$, $r > 0$ and suppose that $G$ is a measurable $(\varepsilon,r)$-good subset of $\partial{D}$, the boundary of a Lipschitz domain. There exists $s_0(\partial D,G)$ such that for all $s<s_0$
  \begin{gather}\label{mbad}
    |\{x \in D:\delta_D(x)<s\}\setminus \cG|\leq 
    s[\cH(\partial D\setminus G)+\varepsilon(3+\cH(\partial D))].
  \end{gather}
\end{lemma}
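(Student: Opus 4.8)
The plan is to reduce the set $\{x \in D : \delta_D(x) < s\} \setminus \cG$ to a controlled neighborhood of the bad part of the boundary, $\partial D \setminus G$, together with a small error coming from the $\varepsilon$-slack in the cone condition at good points. First I would cover $\partial D$ by two pieces: the bad part $\partial D \setminus G$ and the good part $G$. For a point $x \in D$ with $\delta_D(x) < s$, pick $p \in \partial D$ with $|x - p| = \delta_D(x) < s$. If $p \in \partial D \setminus G$, then $x$ lies in the $s$-neighborhood of $\partial D \setminus G$; by the standard fact that a Lipschitz boundary is $(d-1)$-rectifiable (so Minkowski and Hausdorff content of the boundary agree up to the right constant, cf. the corresponding estimates in \cite{B}), the Lebesgue measure of an $s$-tube around $\partial D \setminus G$ is at most $s\,\cH(\partial D \setminus G) + o(s)$ as $s \to 0$, which supplies the $s\,\cH(\partial D\setminus G)$ term plus a piece that can be folded into the $\varepsilon$-term once $s_0$ is chosen small enough (depending on $\partial D$ and $G$).

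The remaining case is $p \in G$. Here the point is that $x$ is within distance $s < r$ of $p$, and we must show that if $x$ fails to land in the cone $\Gamma_r(p,\varepsilon)$ then $x$ lies in a region of small measure. Write $x - p = |x-p|\,\omega$ for a unit vector $\omega$. If $x \in \Gamma_r(p,\varepsilon)$ we are done, so assume $(x-p)\cdot\nu(p) \le \sqrt{1-\varepsilon^2}\,|x-p|$. Since $p$ realizes the distance $\delta_D(x)$, the ball $B(x,|x-p|)$ is contained in $D$, hence misses $\partial D$; combined with the $(\varepsilon,r)$-goodness inclusion \eqref{defgood}, which forces $\partial D \cap B(p,r)$ to lie in the thin slab $\{|(y-p)\cdot\nu(p)| < \varepsilon|y-p|\}$, an elementary geometric argument shows that such an $x$ must lie in the thin double-cone region $\{y \in B(p,s) : |(y-p)\cdot\nu(p)| < c\,\varepsilon\,|y-p|\}$ around the slab (the complement of the two solid cones around $\pm\nu(p)$), or else be within distance $\sim\varepsilon s$ of $\partial D$ on the far side. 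Integrating: the set of such $x$ over all $p \in G$ is contained in an $s$-tube around $G$ intersected with a slab of angular half-width $O(\varepsilon)$, whose volume is bounded by $s$ times a constant multiple of $\varepsilon$ times (a quantity comparable to $\cH(\partial D)$, from covering $G$ by boundary patches), which after tracking the constants gives the $\varepsilon(3 + \cH(\partial D))$ contribution.

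Assembling the two cases yields \eqref{mbad} for all $s$ below a threshold $s_0(\partial D, G)$ chosen so that the Minkowski-content estimate for the $s$-tube around $\partial D \setminus G$ is within the stated form and so that $s < r$ automatically holds. I expect the main obstacle to be the geometric step in the good case: pinning down exactly why a point $x$ with $\delta_D(x)=|x-p|<s$, $p\in G$, that escapes the cone $\Gamma_r(p,\varepsilon)$ is confined to a set of measure $O(\varepsilon s)$ per unit boundary area — one has to use both that $B(x,|x-p|)\subset D$ avoids $\partial D$ and that $\partial D$ near $p$ is trapped in the $\varepsilon$-slab, and then optimize the constants to match the clean coefficient $3 + \cH(\partial D)$. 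The rectifiability/Minkowski-content input for the bad case is standard for Lipschitz domains and I would cite \cite{B} for it rather than reprove it.
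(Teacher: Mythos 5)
Your plan is genuinely different from the paper's proof, and there are two concrete gaps that would need to be filled.

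First, the paper does not estimate the "bad'' set directly. It reduces (following \cite[Prop.\ 1.3]{B}) to proving a \emph{lower} bound
$|\{x\in D:\delta_D(x)<s\}\cap\cG|\ge s(\cH(G)-\varepsilon(2+\cH(G)))$,
and then subtracts this from the upper bound $|\{x:\delta_D(x)<s\}|\le s\cH(\partial D)(1+o(1))$ coming from the Minkowski content of the \emph{whole} boundary. Your first step instead bounds the measure of the $s$--tube around $\partial D\setminus G$ by $s\cH(\partial D\setminus G)+o(s)$. This is not generally valid: $G$ is only assumed measurable, so $\partial D\setminus G$ need not be closed, and the $s$--tube around it coincides with the $s$--tube around its closure, whose Minkowski content controls $\cH(\overline{\partial D\setminus G})$, which can be strictly larger than $\cH(\partial D\setminus G)$ (e.g.\ if $G$ is dense with small measure). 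The Federer theorem \cite[3.2.39]{F} that the paper invokes applies only to \emph{closed} rectifiable sets — that is exactly why both the paper and \cite{B} phrase the argument as ``upper bound on the full tube, lower bound on the good part, subtract'' rather than bounding the tube around the bad set directly.

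Second, the ``good case'' step you flag as the main obstacle is in fact both simpler and stronger than you anticipate, and you have not carried it out. If $p=p(x)\in G$ is a nearest boundary point with $|x-p|=\delta_D(x)<r/2$, then $B(x,|x-p|)\subset D\subset U_r^c(p)$ and $B(x,|x-p|)\subset B(p,r)$, so $B(x,|x-p|)\cap U(p)=\emptyset$. Since $U(p)$ is a cone with vertex $p$ and the ball passes through $p$, taking points of $U(p)$ arbitrarily close to $p$ shows that every unit direction $u$ with $u\cdot\nu(p)<-\varepsilon$ must satisfy $u\cdot(x-p)\le 0$, which forces $\frac{(x-p)\cdot\nu(p)}{|x-p|}\ge\sqrt{1-\varepsilon^2}$, i.e.\ $x\in\overline{\Gamma_r(p,\varepsilon)}$. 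So, apart from a Lebesgue-null boundary set, every $x$ with $\delta_D(x)<s\le r/2$ whose nearest point lies in $G$ already belongs to $\cG$; the good case contributes measure zero, not a genuine $O(\varepsilon s)$ term, and there is no ``thin-slab integration'' to perform. But you did not establish this; you assert ``an elementary geometric argument shows'' and list it as an open obstacle, and the subsequent step in which you ``integrate over $p\in G$'' the measures of fiberwise sets is not a valid estimate as written (it overcounts; making it rigorous amounts to precisely the kind of co-area argument the paper uses, but run in the other direction). By contrast the paper's proof avoids all of this: it discretizes $G$ into closed pieces $F_i$ with nearly constant normals $\nu_i$, shows $p\mapsto p+\rho\nu_i$ is injective on $F_i\times(0,r)$ using $\varepsilon<1/2$, and applies the co-area formula to get the lower bound directly, with the $\varepsilon$-dependent losses coming from the discretization and the Jacobian factor $1-\varepsilon$.
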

This lemma states that the measure of the set of the bad points near the boundary is small. The proof is essentially the same as the proof as Proposition 1.3 in \cite{B}. We present the proof here due to the fact the set $\cG$ is constructed from narrower cones than those in \cite{B}.
\begin{proof}
  It is enough (see proof of \cite[Proposition 1.3]{B}) to show that
  \begin{gather}\label{Gs}
    |\left\{ x\in D:\delta_D(x)<s \right\}\cap\cG|\ge s(\cH(G)-\varepsilon(2+\cH(G))).
  \end{gather}
  
  Choose $\nu_1,\nu_2,\dots,\nu_N\in S^{d-1}$ and closed disjoint sets $F_1,F_2,\dots,F_N\subset G$ satisfying
  $\cH(G\setminus\bigcup_{i=1}^{N}F_i)<\varepsilon$, $\diam(F_i)<r$ and $|\nu(p)-\nu_i|<\varepsilon$, for any $p\in F_i$. 

  Note that $p+\rho \nu_i\in \Gamma_r(p,\varepsilon)$ for any $\rho<r$ and $p\in F_i$. 
Indeed $\varepsilon<|(0,1)-(\varepsilon,\sqrt{1-\varepsilon^2})|$, 
where the last distance is the distance between $\nu(p)$ and the arbitrary unit vector $\overrightarrow{pp_1}$, 
where $p_1$ is on the boundary of $\Gamma_{\infty}(p,\varepsilon)$. 

  We claim that $p\to p+\rho \nu_i$ is injective when $p\in F_i$ and $0<\rho<r$. Suppose 
  that $p-q=(\rho-\sigma)\nu_i$ for $p,q \in F_i$ and $\rho,\sigma \in (0,r)$. Since $p\in G$ and $|p-q|<r$
  \begin{gather}
    \begin{split}
    |\rho-\sigma|&=|(\rho-\sigma)\nu_i\cdot \nu_i|=|(p-q)\cdot \nu_i|
    \\&\le
    |(p-q)\cdot \nu(p)|+|(p-q)\cdot (\nu_i-\nu(p))|
    \\&\le
    \varepsilon|p-q|+\varepsilon|p-q|=2\varepsilon|\rho-\sigma|.
    \end{split}
  \end{gather}
This is a contradiction for $\varepsilon<1/2$.

Now we can estimate the Lebesgue measure of the set $\left\{ x\in D:\delta_D(x)<s \right\}\cap\cG$. Let $\gamma$ denote the minimum of $2\dist(F_i,F_j)$ for $i\ne j$ and $s_0=\min(r,\gamma)$. For any $s<s_0$
\begin{gather}
  \begin{split}
    |\{ x\in D&:\delta_D(x)<s \}\cap\cG|\geq
    \sum_{i=1}^{N}|\{p+t\nu_i:p\in F_i, 0<t<s\}|
    \\&\geq
    s(1-\varepsilon)\sum_{i=1}^{N}\cH(F_i)\geq s(\cH(G)-\varepsilon(2+\cH(G))),
  \end{split}
\end{gather}
where the second inequality follows from the co-area formula \cite[Theorem 3.2.3]{F} applied to the map $(p,t)\to p+t\nu_i$. This proves \eqref{Gs}.
\end{proof}

The existence of a set $G$ is established in \cite[Section 4]{B}.  We recall the following from \cite{B}, Section 4, page 897. 

\begin{lemma}\label{exists}
  For arbitrary $\varepsilon>0$ there exists $r$ such that an $(\varepsilon,r)$-good set $G$ exists and
  \begin{gather}
    \cH(\partial D\setminus G)<\varepsilon.
  \end{gather}
\end{lemma}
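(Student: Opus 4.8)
The plan is to reduce the statement to Rademacher's theorem together with an Egorov-type passage from almost-everywhere to uniform differentiability for the Lipschitz functions whose graphs make up $\partial D$. First I would fix the local structure. Since $\partial D$ is compact, it is covered by finitely many relatively open pieces $\Sigma_1,\dots,\Sigma_M$ such that, after a rigid motion, $\Sigma_j=\Phi_j(Q_j)$ with $\Phi_j(x')=(x',\phi_j(x'))$, $Q_j\subset\mathbb{R}^{d-1}$ an open cube, $\phi_j$ Lipschitz with constant $\le L$, and $D$ lying locally on one side of $\Sigma_j$. Let $\rho>0$ be a Lebesgue number of this cover, so that for every $q\in\partial D$ there is a $j$ with $B(q,\rho)\cap\partial D\subset\Sigma_j$; set $\Sigma_j^\circ=\{q\in\Sigma_j:\ B(q,\rho)\cap\partial D\subset\Sigma_j\}$, a relatively open (hence measurable) subset, and note $\bigcup_j\Sigma_j^\circ=\partial D$. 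By the area formula, $\cH(\Phi_j(A))=\int_A\sqrt{1+|\nabla\phi_j|^2}\,dx'\le\sqrt{1+L^2}\,|A|$ for measurable $A\subset Q_j$, so it suffices to control $(d-1)$-dimensional Lebesgue measure on the cubes $Q_j$, and measurability of subsets of $\partial D$ may be checked through the bi-Lipschitz charts $\Phi_j$.

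Next I would fix $j$, drop the index, and relate the good condition to differentiability of $\phi$. By Rademacher's theorem $\phi$ is differentiable at a.e. $a\in Q$, and at such a point the inner unit normal at $p=\Phi(a)$ exists and equals $\pm(\nabla\phi(a),-1)/\sqrt{1+|\nabla\phi(a)|^2}$; for $x=\Phi(b)$ one computes $(x-p)\cdot\nu(p)=\pm\bigl(\phi(b)-\phi(a)-\nabla\phi(a)\cdot(b-a)\bigr)/\sqrt{1+|\nabla\phi(a)|^2}$ while $|x-p|\ge|b-a|$. Since that denominator is $\ge1$, the inclusion \eqref{defgood} holds at $p$ with radius $r$ as soon as $r\le\rho$, $p\in\Sigma^\circ$ (so that $B(p,r)\cap\partial D\subset\Sigma$), and
\[
\bigl|\phi(b)-\phi(a)-\nabla\phi(a)\cdot(b-a)\bigr|\le\varepsilon\,|b-a|\qquad\text{for all }b\in Q\text{ with }|b-a|<r .
\]
For $k\in\mathbb{N}$ let $E_k$ be the set of $a\in Q$ at which $\phi$ is differentiable and for which this last inequality holds with $r=1/k$. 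Restricting $b$ to rational points, using continuity of $\phi$, exhibits $E_k$ as a countable intersection of measurable sets; the $E_k$ increase with $k$; and by the very definition of the derivative every differentiability point of $\phi$ lies in some $E_k$ for the fixed $\varepsilon$. Hence $Q\setminus\bigcup_kE_k$ is Lebesgue null and $|Q\setminus E_k|\downarrow0$, so I can pick $k_j$ with $|Q_j\setminus E_{k_j}|<\varepsilon/(M\sqrt{1+L^2})$.

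Finally I would assemble the set. Put $r=\min\bigl(\rho,\ \min_{1\le j\le M}1/k_j\bigr)$ and $G=\bigcup_{j=1}^{M}G_j$ with $G_j=\Phi_j(E_{k_j})\cap\Sigma_j^\circ$. Each $p\in G$ lies in some $G_j$, where all three conditions of the previous paragraph hold with this $r$, so $G$ is $(\varepsilon,r)$-good and measurable. If $q\in\partial D\setminus G$, choose $j$ with $q\in\Sigma_j^\circ$ by the Lebesgue number property; then $q\notin\Phi_j(E_{k_j})$ (otherwise $q\in G_j$), so $q\in\Phi_j(Q_j\setminus E_{k_j})$ since $\Phi_j$ is a bijection of $Q_j$ onto $\Sigma_j$. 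Therefore $\partial D\setminus G\subset\bigcup_j\Phi_j(Q_j\setminus E_{k_j})$, and
\[
\cH(\partial D\setminus G)\le\sum_{j=1}^{M}\sqrt{1+L^2}\,|Q_j\setminus E_{k_j}|<\varepsilon .
\]

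The step needing the most care is the mutual compatibility of the scales packaged into $r$: it must be $\le\rho$ so that $B(p,r)$ meets $\partial D$ inside a single Lipschitz chart, and $\le 1/k_j$ at each retained $p\in\Sigma_j$ so that membership in $E_{k_j}$ controls the affine approximation of $\phi_j$ on that scale — while, crucially, the loss of $\cH$-measure comes only from the non-uniformly-differentiable sets $Q_j\setminus E_{k_j}$, which the Egorov-type step makes as small as desired. The genuine analytic input is just Rademacher's theorem together with the elementary passage from a.e.\ differentiability to uniform differentiability on a large set; everything else is bookkeeping with the Lipschitz charts.
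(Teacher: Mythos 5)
Your argument is correct and is in essence the same proof the authors rely on: the paper does not prove Lemma \ref{exists} itself but cites \cite{B}, Section 4, and Brown's construction there is precisely this Rademacher-plus-Egorov-type uniformization carried out in finitely many Lipschitz graph charts, so your write-up is just a self-contained rendition of the cited argument. Two harmless slips worth fixing: the set $\Sigma_j^\circ=\{q\in\partial D: B(q,\rho)\cap\partial D\subset\Sigma_j\}$ is relatively closed rather than open (its complement is $\{q\in\partial D:\dist(q,\partial D\setminus\Sigma_j)<\rho\}$), which is still all you need for measurability; and your chain of estimates gives the non-strict bound $|(x-p)\cdot\nu(p)|\le\varepsilon|x-p|$, so either run the construction with $\varepsilon/2$ in the definition of $E_k$ or observe that the non-strict inequality suffices for the way \eqref{defgood} is used in Lemma \ref{hausdorff}.
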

This and Lemma \ref{hausdorff} imply that
\begin{gather}
    |\{x \in D: \, \delta_D(x)<s\}\setminus \cG|\leq 
    s\varepsilon(4+\cH(\partial D)).
\end{gather}

For arbitrary $0 < \varepsilon<1/4$, let $G$ be the $(\varepsilon,r)$-good set from the above Lemma. We construct a good set $\cG$ using this particular set $G$. For any point $x$ in $\cG$ there is a point on the boundary $p(x)$ such that $x\in\Gamma_r(p(x),\varepsilon)$.

We define inner and outer cones as follows
\begin{gather}
  I_r(p(x))=\left\{ y: (y-p(x))\cdot \nu(p(x))>\varepsilon|y-p(x)|  \right\}\cap B(p(x),r),\label{innercone}\\
  U_r(p(x))=\left\{ y: (y-p(x))\cdot \nu(p(x))<-\varepsilon|y-p(x)|  \right\}\cap B(p(x),r).\label{outercone}
\end{gather}
For $\varepsilon < 1/4$ and $x \in \cG$ we have
\begin{gather}
  \Gamma_r(p(x),\varepsilon)\subset I_r(p(x))\subset D\subset U_r^c(p(x)).
\end{gather}

Now our aim is to show that there exists a half--space $H^*(x)$ such that 
\begin{equation}
\label{defH*}
x \in H^*(x), \quad
\delta_{H^*(x)}(x)=\delta_D(x), \quad I_r(p(x)) \subset H^*(x) \subset U_r^c(p(x)).
\end{equation}

\begin{figure}[ht]
  \begin{center}
\begin{tikzpicture}[scale=0.95]
  \clip (-6.5,-3) rectangle (6.5,2);
  \begin{scope}
    \clip (0,0) circle (5);
    \fill[gray!40] (-10,2) -- (0,0) -- (10,2) -- (0,20) -- cycle;
    \draw (3,1.5) node {outer cone $U_r$};
    \fill[gray!40] (-10,-2) -- (0,0) -- (10,-2) -- (0,-20) -- cycle;
    \draw (3,-2) node {inner cone $I_r$};
  \end{scope}
  \draw[dashed] (0,0) circle (5);
  \draw plot coordinates {(-6,0) (-5,0.8) (-4,-0.6) (-3,0.2) (-2,-0.3) (-1,0.1) (0,0) (0.5,0.1) (1.1,-0.15) (2,-0.2) (3,-0.0) (4,0.6) (5,-0.9) };
  \draw (-6,0) node[below] {$\partial D$}
  (5,-2.5) node {$H^{\perp}$}
	(-0.25,-2) node[below right=-2pt] {$x$}
	(-5,-2.5) node {$D$}
	(0,0) node[above right] {$p(x)$};
  \begin{scope}[rotate=9]
  \coordinate (X) at (0,-2);
  \draw[->] (0,0) -- (X);
  \draw (-5,0) -- (6,0) node[pos=0.95,below right] {$\partial H^{\perp}$};
  \end{scope}
  \draw[<->] (X)--(1.1,-0.15) node [pos=0.6] {$\delta_D(x)$};
  \begin{scope}[rotate=-4]
  \draw (-6,0) -- (6,-0) node[pos=0.95,below right] {$\partial H^*$};
  \draw[<->] (X) -- (0.5,0);
  \end{scope}
  \draw (-6,1.2) -- (6,-1.2) node[pos=1,below] {$\partial H'$};
\end{tikzpicture}

  \end{center}
  \caption{A halfspace preserving the distance from $x$ to the boundary of $D$.}
  \label{fig2}
\end{figure}
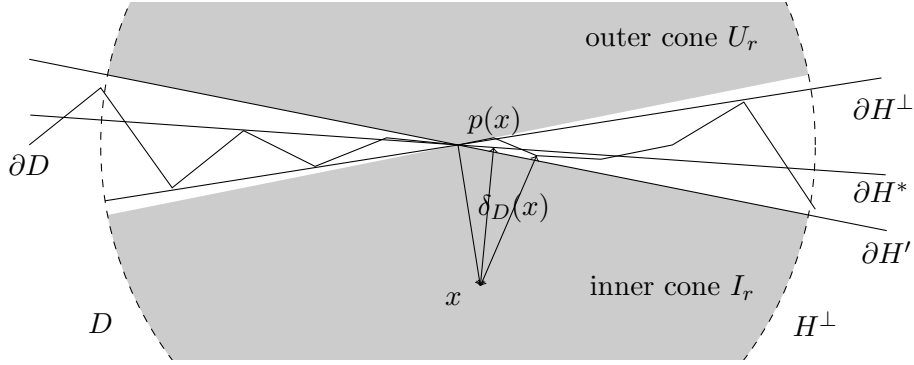

Let $H^{\perp}(x)$ be the halfspace containing the inner cone $I_r(p(x)$ with the boundary $\partial{H^{\perp}(x)}$ containing $p(x)$ and perpendicular to $x - p(x)$. Consider an arbitrary 2-dimensional plane containing the axis of the inner cone $I_r(p(x))$. (The projections of $U_r(p(x))$, $I_r(p(x))$, $\Gamma_r(p(x),\varepsilon)$ onto this plane are shown on Figure \ref{fig1}.) Let $\overrightarrow{p(x)p_1}$ be a unit vector lying on this plane such that $p_1 \in \partial \Gamma_{\infty}(p(x),\varepsilon)$ and $\overrightarrow{p(x)p_2}$, $\overrightarrow{p(x)p_3}$ be unit vectors lying on this plane such that $p_2, p_3 \in \partial U_{\infty}(p(x))$ and let $|p_1 - p_2| < |p_1 - p_3|$. Then note that due to the chosen apertures of $\Gamma_{\infty}(p(x),\varepsilon)$ and $U_{\infty}(p(x))$ vectors $\overrightarrow{p(x)p_1}$ and $\overrightarrow{p(x)p_2}$ are perpendicular. It follows that $H^{\perp}(x)$ is contained in  $U_r^c(p(x))$ and contains $I_r(p(x))$ (see Figure \ref{fig2}). 

Let $\overrightarrow{n}$ be the unit inner normal vector for $H^{\perp}$.
Let $l$ be the point $p(x)$ when the dimension $d = 2$ and let $l$ be the $(d - 2)$ dimensional hyperplane containing $p(x)$ and perpendicular to $\nu(p(x))$ and $\overrightarrow{n}$ when $d \ge 3$. If $d \ge 3$ and $\nu(p(x)) = \overrightarrow{n}$ let $l$ be a fixed arbitrary $(d - 2)$ dimensional hiperplane containing $p(x)$ and perpendicular to $\nu(p(x))$.  By rotating $H^{\perp}$ around $l$ by some angle $\beta$ we obtain the halfspace $H'(x)$ such that $x \in H'(x)$, $\delta_{H'(x)}(x) = \delta_{I_r(p(x))(x)}$ and $I_r(p(x)) \subset H'(x) \subset U_r^c(p(x))$ (see Figure \ref{fig2}). Note that 
$$
\delta_{I_r(p(x))}(x) = \delta_{H'(x)}(x) \le \delta_D(x)\le |x-p(x)|=\delta_{H^\perp(x)}(x)=\delta_{U_r(p(x))}(x).
$$
It follows that by rotating $H^{\perp}$ around $l$ by some angle, which is smaller or equal than $\beta$, we can obtain the halfspace $H^*(x)$ such that $x \in H^*(x)$ and 
$$
\delta_{H^*(x)}(x)=\delta_D(x).
$$
We also have
$$
I_r(p(x)) \subset H^*(x) \subset U_r^c(p(x)).
$$

To simplify notation we will write 
\begin{equation} \label{fH} f_H(t,q) =
r_H(t,(q,0,\dots,0),(q,0,\dots,0)), \quad t,q > 0, \end{equation} 
where as before 
$H = \{(x_1,\ldots,x_d) \in \Rd: \, x_1 > 0\}$ is the half space. 

Our Proposition \ref{lipint} below is a modification of Proposition 1.1 in \cite{B}. We need this modification because we need to  apply it to the function $q \to f_H(1,q)$ (see formula (\ref{fH}) for the definition of $f_H$) which is not known to have the necessary estimates on its derivatives needed in the formulation of Proposition 1.1 in \cite{B}.

 Let $\omega'_s = \pi^{s/2}/\Gamma(s/2 + 1)$ and for $0 \le k \le d$ we recall that the {\it{the Minkowski content}} of a set $E$ is defined by 
$$
\mathcal{M}(E) = \lim_{r \to 0^+} \frac{|\{x \in D: \, \dist(x,E) < r\}|}{\frac{1}{2} \omega'_{d - k} r^{d - k}}.
$$
We note that if $E \subset \partial{D}$, the boundary of a Lipschitz domain, and $E$ is closed, then $\mathcal{M}^{d-1}(E) = \cH(E)$ [\cite{F}, Theorem 3.2.39], where $\cH$ denotes the more familiar Hausdorff measure. Note also that $\frac{1}{2} \omega'_1 = 1$.

\begin{proposition}\label{lipint}
Let $D \subset \Rd$ be a bounded Lipschitz domain. 
Suppose that $f:(0,\infty)\to R$ is continuous and satisfies $f(r) \le c (1 \wedge r^{-\beta})$, $r > 0$, for some $\beta > 1$. Furthermore,  suppose that for any $0 < R_1 < R_2 < \infty$, f is Lipschitz on $[R_1,R_2]$. Then we have
  \begin{gather}
    \lim_{\eta\to0^+} \frac{1}{\eta}\int_D f\left(\frac{\delta_D(x)}{\eta}\right)dx= \cH(\partial{D}) \int_0^\infty f(r) \, dr.
  \end{gather}
\end{proposition}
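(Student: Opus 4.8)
The plan is to reduce the integral over $D$ to an integral over the boundary via a coarea-type decomposition, handling the good set $\cG$ (where the distance function is essentially one-dimensional along cone axes) and the bad complement separately. First I would split $D$ into the region $\{\delta_D(x) < \eta^{1/2}\}$, say (or more flexibly $\{\delta_D(x) < \rho\}$ for a small fixed $\rho$ to be sent to $0$ after $\eta$), and its complement. On the complement, $\delta_D(x)/\eta$ is large, so the bound $f(r) \le c(1\wedge r^{-\beta})$ with $\beta > 1$ forces $\frac1\eta \int_{\{\delta_D \ge \rho\}} f(\delta_D(x)/\eta)\,dx \le \frac{c}{\eta}\cdot |D| \cdot (\eta/\rho)^{\beta} \to 0$ as $\eta \to 0$, since $\beta > 1$. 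So only the thin collar near $\partial D$ matters, and there I use the good set.

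Next, using Lemma \ref{exists} fix $\varepsilon$ small and the associated $(\varepsilon,r)$-good set $G$ with $\cH(\partial D \setminus G) < \varepsilon$, and form $\cG = \bigcup_{p \in G}\Gamma_r(p,\varepsilon)$. On the bad part $\{\delta_D(x) < \rho\} \setminus \cG$, Lemma \ref{hausdorff} gives $|\{\delta_D(x) < s\}\setminus\cG| \le s\,\varepsilon(4 + \cH(\partial D))$ for $s < s_0$, so by the layer-cake formula (writing $\frac1\eta f(\delta_D/\eta) = \frac1\eta \int_0^\infty \mathbf{1}_{\{f(\delta_D/\eta) > \lambda\}}\,d\lambda$ and using monotonicity of the bound, or more simply integrating $|\{\delta_D < s\}\setminus \cG|$ against $-d[f(s/\eta)]$ after controlling $f$ by a monotone majorant) the contribution of the bad set is $O\big(\varepsilon(4+\cH(\partial D))\int_0^\infty f(r)\,dr\big)$, which is $O(\varepsilon)$ uniformly in $\eta$. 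The integrability of $f$ at $\infty$ (again from $\beta > 1$) and the bound $f \le c$ near $0$ make $\int_0^\infty f(r)\,dr$ finite, which is exactly where the hypothesis on $\beta$ is used; the Lipschitz hypothesis is not needed here, only on the good set.

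On the good set $\cG$, the geometry (each $x \in \cG$ lies on a cone $\Gamma_r(p(x),\varepsilon)$ whose axis is the inner normal, with $\delta_D(x)$ comparable to, and in the limit equal to, the distance along that axis) lets me parametrize $\cG \cap \{\delta_D < \rho\}$ essentially as $\partial D \times (0,\rho)$ with the coarea factor tending to $1$ as $\varepsilon \to 0$: this is precisely the computation behind Lemma \ref{hausdorff}, and I would mirror the argument of \cite[Proposition 1.1]{B}. Concretely, one uses the co-area formula for the map $(p,t) \mapsto p + t\nu(p)$ on (a large measurable piece of) $G$, so that $\frac1\eta\int_{\cG \cap \{\delta_D < \rho\}} f(\delta_D(x)/\eta)\,dx$ is, up to an $O(\varepsilon)$ multiplicative/additive distortion, $\frac1\eta \int_G \int_0^\rho f(t/\eta)\,dt\,d\cH(p) = \cH(G) \int_0^{\rho/\eta} f(r)\,dr$. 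Here the Lipschitz-on-compacts hypothesis on $f$ (replacing the derivative bounds in \cite{B}) is used to control the error from the discrepancy between $\delta_D(x)$ and the axial coordinate $t$, uniformly on $[R_1,R_2]$ after first cutting off the range $t/\eta \in (a, \rho/\eta)$ for small fixed $a > 0$ and separately bounding the $t/\eta \in (0,a)$ piece by $c\,a\,\cH(\partial D)$ using $f \le c$. Letting $\eta \to 0$ first gives $\cH(G)\int_0^\infty f(r)\,dr + O(\varepsilon)$, then $\rho \to 0$, then $\varepsilon \to 0$ (so $\cH(G) \to \cH(\partial D)$ and all error terms vanish), yields the claimed limit.

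The main obstacle is the good-set estimate: making rigorous that the collar $\cG \cap \{\delta_D < \rho\}$ is, up to controlled error, a product $\partial D \times (0,\rho)$ under the near-normal coordinates, and that the Lipschitz-on-compacts property of $f$ is enough to absorb the non-constancy of the coarea Jacobian and the gap between $\delta_D$ and the axial parameter. I expect this to follow the template of \cite[Proposition 1.1]{B} closely, with the replacement of $f'$ bounds by the weaker Lipschitz control being the one genuinely new technical point, handled by splitting off a small neighborhood of $r = 0$ where $f$ is merely bounded.
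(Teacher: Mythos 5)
Your route is genuinely different from the paper's, but as written it has a gap in one direction. The paper's proof of Proposition \ref{lipint} never touches the good set $\cG$ or the co-area formula: it sets $\psi_{\eta}(r)=\eta^{-1}|\{x\in D:\delta_D(x)<\eta r\}|$, rewrites the quantity as the Stieltjes integral $\int_0^\infty f(r)\,d\psi_\eta(r)$, imports the two-sided Minkowski-content identity $\lim_{s\to 0^+}|\{x\in D:\delta_D(x)<s\}|/s=\cH(\partial D)$ for Lipschitz boundaries (via \cite[Theorem 3.2.39]{F}), and then treats $[R_1,R_2]$ by Riemann--Stieltjes sums and summation by parts (this is exactly where the Lipschitz-on-compacts hypothesis on $f$ enters), with the two tails controlled by $\psi_\eta(r)\le cr$ and by integration by parts together with the $r^{-\beta}$ decay. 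Your outer region, bad-set, and tail estimates are fine and essentially parallel in spirit.

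The gap is in your good-set step. The parametrization $(p,t)\mapsto p+t\nu_i$ on $F_i\times(0,s)$ is injective and its image lies inside the collar (this is the computation in Lemma \ref{hausdorff}), so it yields only a \emph{lower} bound, roughly $\liminf\ge(\cH(G)-O(\varepsilon))\int_0^\infty f(r)\,dr$ for $f\ge 0$. It does not show that $\cG\cap\{\delta_D<\rho\}$ is covered, up to measure $O(\varepsilon s)$, by such rays with $t\simeq\delta_D$: a point $x\in\Gamma_r(p(x),\varepsilon)$ need not lie on the ray $p+\mathbb{R}_+\nu_i$, and $p(x)$ need not belong to $\bigcup_i F_i$. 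Consequently the matching $\limsup$ bound $\le(\cH(\partial D)+O(\varepsilon))\int_0^\infty f(r)\,dr$ is not established; what it requires is precisely the upper Minkowski estimate $|\{x\in D:\delta_D(x)<s\}|\le s(\cH(\partial D)+o(1))$, which is \emph{not} contained in Lemma \ref{hausdorff} (that lemma bounds only the bad part, taking the total-collar upper bound as known, as in \cite[Proposition 1.3]{B}) and must be supplied either by citing Federer as the paper does or by a separate covering argument over Lipschitz graph patches as in \cite{B}. A secondary, fixable point: you invoke the Lipschitz property of $f$ on $[a,\rho/\eta]$, but this interval is not compact uniformly in $\eta$ and the hypothesis gives no uniform Lipschitz constant there; you need a fixed upper cutoff $R_2$ (with the range $r>R_2$ disposed of by the $\beta>1$ decay, as in the paper) before letting $\eta\to 0$.
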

\begin{proof}
Let $\psi_{\eta}(r) = \eta^{-1} |\{x \in D: \, \delta_D(x) < \eta r\}|$. We have (cf. proof of Proposition 1.1 in \cite{B})
$$
\eta^{-1} \int_D f(\delta_D(x)/\eta) \, dx = \int_0^{\infty} f(r) \, d\psi_{\eta}(r).
$$
So in order to prove the proposition we need to show that 
\begin{equation}
\label{liminf1}
\lim_{\eta\to 0^+} \int_0^{\infty} f(r) \, d\psi_{\eta}(r) =
\cH(\partial{D}) \int_0^{\infty} f(r) \, dr.
\end{equation}
Let $0 < R_1 < R_2 < \infty$ and $\eta > 0$ be arbitrary. We claim that
\begin{equation}
\label{liminf2}
\int_0^{R_1} f(r) \, d\psi_{\eta}(r) \le c R_1,
\end{equation}
\begin{equation}
\label{liminf3}
\int_{R_2}^{\infty} f(r) \, d\psi_{\eta}(r) \le c \eta^{\beta - 1} + c R_2^{1 - \beta},
\end{equation}
and that 
\begin{equation}
\label{liminf4}
\lim_{\eta \to 0^+} \int_{R_1}^{R_2} f(r) \, d\psi_{\eta}(r) =
\cH(\partial{D}) \int_{R_1}^{R_2} f(r) \, dr.
\end{equation}
Here and below constants $c$ depend only on $f$, $\beta$ and $D$ and may change value from line to line. Note that (\ref{liminf2}),(\ref{liminf3}),(\ref{liminf4}) imply (\ref{liminf1}). 
\bigskip

\noindent Proof of (\ref{liminf2}):  Note that for any $\eta> 0 $ the function $r \to \psi_{\eta}(r)$ is nondecreasing and for any $\eta > 0$, $r > 0$ we have $\psi_{\eta}(r) \le c r$. Hence
$$
\int_0^{R_1} f(r) \, d\psi_{\eta}(r) \le ||f||_{\infty} \psi_{\eta}(R_1) \le c R_1 ||f||_{\infty}.
$$
\bigskip

\noindent Proof of (\ref{liminf3}): Let $M = \sup_{x \in D} \delta_D(x)$ (the inner radius of $D$). 
Since $D$ is bounded, we certainly have $M < \infty$. For $\eta > 0$ put $M_{\eta} = (M + 1)/\eta$. Clearly for any $r \ge M_{\eta}$ we have $\psi_{\eta}(r) = \psi_{\eta}(M_{\eta}) = \eta^{-1} |D|$. We have
$$
\int_{R_2}^{\infty} f(r) \, d\psi_{\eta}(r) = 
\int_{R_2}^{M_{\eta}} f(r) \, d\psi_{\eta}(r) \le
c \int_{R_2}^{M_{\eta}} r^{-\beta} \, d\psi_{\eta}(r).
$$
Integrating by parts this quantity equals 
\begin{eqnarray*}
&& c M_{\eta}^{-\beta} \psi_{\eta}(M_{\eta}) 
- c R_2^{-\beta} \psi_{\eta}(R_2)
+ c \int_{R_2}^{M_{\eta}} r^{-\beta - 1} \psi_{\eta}(r) \, dr \\
&\le& c \eta^{\beta - 1} + c \int_{R_2}^{\infty} r^{-\beta} \, dr \\
&=& c \eta^{\beta - 1} + c R_2^{1 - \beta},
\end{eqnarray*}
which proves (\ref{liminf3}).
\bigskip

\noindent Proof of (\ref{liminf4}):  Set $g(r) = |\{x \in D: \, \delta_D(x) < r\}|$. By definition we have $\lim_{r \to 0^+} g(r)/r = \cH(\partial{D})$. Put
$$
G(r) = \sup_{s \in (0,r]} \left|\frac{g(s)}{s} - \cH(\partial{D})\right|.
$$
Clearly $G$ is nondecreasing and in fact $G(r) \downarrow 0$ as $r \downarrow 0^+$. Note that for $r \le R_2$ we have 
$$
|\psi_{\eta}(r) - \cH(\partial{D}) r| =
\left|\left(\frac{g( \eta r)}{\eta r} - \cH(\partial{D})\right) r \right| \le G(\eta r) r \le G(\eta R_2) R_2.
$$
Let $P = \{x_0,x_1,\ldots,x_{n(P)}\}$ be a partition of the interval $[R_1,R_2]$. That is, 
 $$
R_1 = x_0 < x_1 < \ldots < x_{n(P)} = R_2
$$
and let $S(P,f,\psi_{\eta})$, $S(P,f)$ be the Riemann-Stieltjes  sums
$$
S(P,f,\psi_{\eta}) =
\sum_{i = 1}^{n(P)} f(x_i) (\psi_{\eta}(x_i) - \psi_{\eta}(x_{i - 1})),
$$
$$
S(P,f) =
\sum_{i = 1}^{n(P)} f(x_i) (x_i - x_{i - 1}).
$$
Since  $f$ is Lipschitz on $[R_1,R_2]$ there exists a constant $L > 0$ such that for all $r_1,r_2 \in [R_1,R_2]$
$$
|f(r_2) - f(r_1)| \le L |r_2 - r_1|.
$$
This gives 
\begin{eqnarray*}
&& |S(P,f,\psi_{\eta}) - \cH(\partial{D}) S(P,f)| \\
&=& \left|f(x_1) (-\psi_{\eta}(x_0) + \cH(\partial{D}) x_0) \right.\\
&& \quad \quad + \sum_{i = 1}^{n(P) - 1} (f(x_i) - f(x_{i+1})) (\psi_{\eta}(x_i) - \cH(\partial{D}) x_i) \\
&& \quad \quad   + \left. f(x_{n(P)}) (\psi_{\eta}(x_{n(P)}) - \cH(\partial{D}) x_{n(P)})\right| \\
&\le& 2 ||f||_{\infty} G(R_2 \eta) R_2 + 
G(R_2 \eta) R_2 \sum_{i = 1}^{n(P)} L (x_{i + 1} - x_{i}) \\
&=& G(R_2 \eta) R_2 (2 ||f||_{\infty} + L (R_2 - R_1)).
\end{eqnarray*}
Hence for any partition $P$ we have
$$
|S(P,f,\psi_{\eta}) - \cH(\partial{D}) S(P,f)| \le 
G(R_2 \eta) R_2 (2 ||f||_{\infty} + L (R_2 - R_1)).
$$
This implies that 
$$
\left|\int_{R_1}^{R_2} f(r) \, d\psi_{\eta}(r) -
\cH(\partial{D}) \int_{R_1}^{R_2} f(r) \, dr\right| \le
G(R_2 \eta) R_2 (2 ||f||_{\infty} + L (R_2 - R_1)),
$$
which gives (\ref{liminf4}).
\end{proof}

\section{Proof of main result}
\begin{proof}[Proof of Theorem \ref{main}]

By \eqref{p-rd}
\begin{eqnarray*}
  -t^{d/\alpha}\int_D r_D(t,x,x) \, dx &=& t^{d/\alpha}\int_D (p_D(t,x,x)-p(t,x,x))dx\\
  &=& t^{d/\alpha}Z_D(t)-C_1|D|.
\end{eqnarray*}
Therefore to prove Theorem \ref{main} we must show that
for an arbitrary $\varepsilon>0$ there exists a $t_0>0$ such that for any $0<t<t_0$,
\begin{gather}
\label{enough}
 \left| t^{d/\alpha}\int_D r_D(t,x,x)dx-C_2\cH(\partial{D})t^{1/\alpha}\right|\leq c(\varepsilon) t^{1/\alpha}, 
\end{gather}
where $c(\varepsilon) \to 0$ as $\varepsilon \to 0$. 

Fix $0<\varepsilon<1/4$. We will use the  $(\varepsilon,r)$-good set $G$ from Lemma \ref{exists} and corresponding set $\cG$ defined by (\ref{cG}). 

We need to estimate
\begin{gather*}
  t^{d/\alpha} \int_D r_D(t,x,x) dx.
\end{gather*}

We split this integral into three sets
\begin{gather*}
  \cD_1=\left\{ x\in D\setminus\cG:\delta_D(x)<s \right\},\\
  \cD_2=\left\{ x\in D\cap\cG:\delta_D(x)<s  \right\},\\
  \cD_3=\left\{ x\in D:\delta_D(x)\geq s \right\},
\end{gather*}
where $s$ must be smaller than the $s_0$ given by Lemma \ref{hausdorff}. For small enough $t$ we can take 
\begin{gather*}
  s=t^{1/\alpha}/\sqrt\varepsilon.
\end{gather*}
We can also assume that $s_0<r/4$.

By Lemmas \ref{rD}, \ref{hausdorff} and \ref{exists}, 
\begin{gather}
\label{D1}
  t^{d/\alpha}\int_{\cD_1} r_D(t,x,x)dx\leq
  c |\cD_1|\leq C(\partial D) \varepsilon s=
  C(\partial D)\sqrt\varepsilon t^{1/\alpha},
\end{gather} 
where $C(\partial D)$ is a constant depending on $d$, $\alpha$ and $\partial{D}$.

Lemma \ref{rD} and the definition of the set $\cD_3$ give
\begin{gather}
\label{D3a}
  \begin{split}
    t^{d/\alpha}&\int_{\cD_3} r_D(t,x,x)dx\leq c \int_{\cD_3} \frac{t^{1+d/\alpha}}{\delta_D^{d+\alpha}(x)}\wedge 1\,dx
    \\&=
    c \int_{\cD_3} \left(\frac{t^{1/\alpha}}{\delta_D(x)}\right)^{d+\alpha}\wedge 1\,dx
    \le
    c \int_{\cD_3} \left(\frac{t^{1/\alpha}}{\delta_D(x)}\right)^{d+\alpha-1}\sqrt\varepsilon \wedge 1\,dx
    \\&\le
    c \int_D \left(\frac{t^{1/\alpha}}{\delta_D(x)}\right)^{d+\alpha-1}\sqrt\varepsilon \wedge 1\,dx.
  \end{split}
\end{gather}
Now we apply Proposition \ref{lipint} with $\eta=t^{1/\alpha}$ and $f(r)=\sqrt\varepsilon r^{-d-\alpha+1}\wedge 1$. For small enough $t$ this leads to
\begin{gather}
\label{D3b}
  \begin{split}
    t^{d/\alpha}&\int_{\cD_3} r_D(t,x,x)dx\leq
    c \cH(\partial D) t^{1/\alpha} \int_0^\infty \sqrt{\varepsilon}r^{-d-\alpha+1} \wedge 1\,dr
    \\&=
    c \cH(\partial D) t^{1/\alpha} (\sqrt{\varepsilon})^{1/(d+\alpha-1)}\int_0^\infty r^{-d-\alpha+1} \wedge 1\,dr.
  \end{split}
\end{gather}

By (\ref{D1}) and (\ref{D3b}), we have  that 
$$
t^{d/\alpha} \int_{\cD_1 \cup \cD_3} r_D(t,x,x)dx\leq c(\varepsilon) t^{1/\alpha},
$$
where $c(\varepsilon) \to 0$ as $\varepsilon \to 0$. 

It remains to consider the integral over $\cD_2$. Let $x \in \cD_2 \subset \cG$ and $p(x) \in \partial D$ be such that $x \in \Gamma_r(p(x),\varepsilon)$. We also have $\delta_D(x) < s$. By (\ref{defgood}), (\ref{cG}), (\ref{truncated}), the fact that $s < r/4$ and $\varepsilon < 1/4$ we have that $|x - p(x)| \le 2 \delta_D(x) < 2 s$. Hence $\delta_D(x)$ and $|x - p(x)|$ are comparable and $x \in \Gamma_{2s}(p(x),\varepsilon)$.

We now claim that on $\cD_2$,  $r_D(t,x,x)$ is comparable to $r_{H^*(x)}(t,x,x)$.  
Let $I_r(p(x))$, $U_r(p(x))$, $H^{*}(x)$ be defined by (\ref{innercone}), (\ref{outercone}), (\ref{defH*}). 
Since 
\begin{gather*}
  I_r(p(x))\subset H^{*}(x)\subset U_r^c(p(x))
  \end{gather*}
  and 
  \begin{gather*}
  I_r(p(x))\subset D\subset U_r^c(p(x)),
\end{gather*}
we have 
\begin{gather}
\label{comp}
  |r_D(t,x,x)-r_{H^*(x)}(t,x,x)|\leq r_{I_r(p(x))}(t,x,x)-r_{U_r^c(p(x))}(t,x,x).
\end{gather}

The next proposition asserts that for small $t$, the difference 
$r_{I_r(p)}(t,x,x) -r_{U_r^c(p)}(t,x,x)$ is small. 
\begin{proposition}
\label{rI-rU}
For any $p \in \Rd$, any unit vector $\nu(p) \in \Rd$, $\varepsilon \in (0,1/4)$, $t > 0$, $r > 0$ such that $t^{1/\alpha}/\sqrt{\varepsilon} < r/4$, $s = t^{1/\alpha}/\sqrt{\varepsilon}$ and $x \in \Gamma_{2s}(p,\varepsilon)$ we have
$$ 
0 \le r_{I_r(p)}(t,x,x) - r_{U_r^c(p)}(t,x,x) \le c \frac{\varepsilon^{1 - \alpha/2}\vee \sqrt{\varepsilon}}{t^{d/\alpha}} \left(\left(\frac{t^{1/\alpha}}{\delta_{I_r(p)}(x)}\right)^{d + \alpha-1} \wedge 1\right).
$$
\end{proposition}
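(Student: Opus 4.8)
The plan is to prove the (easy) left inequality by domain monotonicity of the killed kernel, to rewrite the difference as an explicit triple integral by combining the strong Markov property with the Ikeda--Watanabe formula, and then to bound that integral by separating a ``far'' part from a ``thin equatorial slab'' part. Write $I=I_r(p)$, $U=U_r(p)$, $W=(I\cup U)^{c}$; note $I\subset U^{c}$, hence $p_I(t,x,x)\le p_{U^{c}}(t,x,x)$, so by \eqref{p-rd} $r_I(t,x,x)-r_{U^{c}}(t,x,x)=p_{U^{c}}(t,x,x)-p_I(t,x,x)\ge0$. For the upper bound I apply the strong Markov property at $\tau_I$: decomposing $\{\tau_{U^{c}}>t\}$ over $\{\tau_I>t\}$ and $\{\tau_I\le t\}$, observing that on the latter event necessarily $X(\tau_I)\in U^{c}$, and using that $I$ is an open set satisfying the outer cone condition (so $X(\tau_I)\in\partial I$ with probability $0$ and $X(\tau_I)$ has an absolutely continuous law on $I^{c}$), one gets $X(\tau_I)\in W$ a.s.\ on $\{\tau_I\le t<\tau_{U^{c}}\}$ and
\[
r_I(t,x,x)-r_{U^{c}}(t,x,x)=p_{U^{c}}(t,x,x)-p_I(t,x,x)=E^{x}\bigl(\tau_I<t,\ X(\tau_I)\in W;\ p_{U^{c}}(t-\tau_I,X(\tau_I),x)\bigr).
\]
Applying the Ikeda--Watanabe formula (Proposition \ref{IWprop}, extended from indicator integrands to general nonnegative ones by monotone convergence) and then $p_{U^{c}}\le p$ yields
\[
0\le r_I(t,x,x)-r_{U^{c}}(t,x,x)\le\int_I\!\int_0^{t}p_I(s,x,y)\Bigl(\int_W\nu(y-z)\,p(t-s,z,x)\,dz\Bigr)ds\,dy.
\]

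By the scaling \eqref{scalingr} I may take $t=1$, so that $s=1/\sqrt\varepsilon$ and $r>4/\sqrt\varepsilon$. Put $p=0$, let $\nu(0)$ be the cone axis, and write $z=(z_1,z')$ with $z_1$ the coordinate along $\nu(0)$. An elementary computation of the distance from a point of $\Gamma_{2s}(0,\varepsilon)$ to the cone $\{z_1=\varepsilon|z|\}$ shows that $\delta:=\delta_I(x)$ is comparable to $|x|$, hence $\delta\le 2/\sqrt\varepsilon$, and that $x$ lies within angle $\arcsin\varepsilon$ of $\nu(0)$. The set $W$ is the union of $B(0,r)^{c}$ and the part inside $B(0,r)$ of the slab $S=\{\,|z_1|\le 2\varepsilon|z'|\,\}$; on $S$ the quantity $|z|$ is comparable to $|z'|$, and on the sphere of radius $v$ about $0$ the $(d-1)$-dimensional content of $S$ is of order $\varepsilon v^{d-1}$. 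From the near-axis position of $x$ one reads off $|z-x|\ge c\delta$ for all $z\in W$, $|z-x|$ comparable to $|z|$ for $z\in B(0,r)^{c}$, and $|z-x|\ge c(\delta\vee|z|)$ for $z\in S$.

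I then estimate the two contributions separately. For $z\in B(0,r)^{c}$: since $|x|\le 2/\sqrt\varepsilon<r/2$ we have $|z-x|\ge r/2$, hence $p_{U^{c}}(1-s,z,x)\le p(1-s,z,x)\le c\,r^{-d-\alpha}$ for all such $z$ and all $s\in(0,1)$; bounding the remaining factor by $\int_I\int_0^{1}p_I(s,x,y)\,ds\int_{B(0,r)^{c}}\nu(y-z)\,dz\,dy=P^{x}(\tau_I<1,\ X(\tau_I)\in B(0,r)^{c})\le1$ (Proposition \ref{IWprop} with the indicator of $B(0,r)^{c}$), this part is at most $c\,r^{-d-\alpha}\le c\,\varepsilon^{(d+\alpha)/2}$, which in view of $\delta\le 2/\sqrt\varepsilon$ is $\le c(\varepsilon^{1-\alpha/2}\vee\sqrt\varepsilon)(\delta^{-(d+\alpha-1)}\wedge1)$. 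For $z\in S$ one integrates in $z$ first, using $\nu(y-z)=c|y-z|^{-d-\alpha}$, $p(1-s,z,x)\le c\bigl((1-s)(\delta\vee|z|)^{-d-\alpha}\wedge(1-s)^{-d/\alpha}\bigr)$ and the slab content $\asymp\varepsilon v^{d-1}\,dv$; the thinness of $S$ supplies one power of $\varepsilon$, and the remaining integrations in $s$, in $y\in I$ (via $p_I\le p$ and the bounds on $\delta_I(y)$), and in the radial variable $v$ (separating $v<\delta$ from $v\ge\delta$ and $1-s<\delta^{\alpha}$ from $1-s\ge\delta^{\alpha}$, and using $\delta\le 2/\sqrt\varepsilon$ to absorb the leftover powers of $\delta$) produce the bound $c(\varepsilon^{1-\alpha/2}\vee\sqrt\varepsilon)(\delta^{-(d+\alpha-1)}\wedge1)$. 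Undoing the scaling finishes the proof.

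The main obstacle is the slab estimate of the last step. One must juggle several regimes at once --- $|z|$ small or large compared with $\delta$, the remaining time $1-s$ small or large compared with $\delta^{\alpha}$, the pre-jump point $y$ near $\partial I$ or in the interior --- and extract precisely the power $\varepsilon^{1-\alpha/2}\vee\sqrt\varepsilon$: one power of $\varepsilon$ from the thinness of the slab, degraded by the worse of the time scales $\delta$ and $\delta^{\alpha}$, both bounded by a power of $2/\sqrt\varepsilon$. Simultaneously one must lower the spatial decay exponent from the $d+\alpha$ of Lemma \ref{rD} to $d+\alpha-1$ --- which is exactly what is needed for Proposition \ref{lipint} to apply in the proof of Theorem \ref{main}. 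In particular, in the regime $\delta<1$, where Lemma \ref{rD} by itself gives only the $\varepsilon$-free estimate $r_I(1,x,x)\le C$, the restriction of the exit point to $W$ and the slab geometry must both be used in an essential way.
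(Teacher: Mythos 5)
The overall framework you set up matches the paper's: the sign of the difference via domain monotonicity, the representation of the difference as an exit expectation (your strong Markov argument is equivalent to the paper's invocation of Proposition~2.3 in \cite{BK}), the Ikeda--Watanabe rewriting, the far/slab split $W = B(0,r)^c \cup S$, and the estimate for the far part are all essentially identical. The gap is in the slab estimate, which is the heart of the matter.

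You propose to handle the $y$-integral over $I$ ``via $p_I \le p$,'' but this throws away exactly the information that makes the slab integral converge. If you replace $p_I(s,x,y)$ by $p(s,x,y)$ and integrate in $y$, the integral $\int_I |y-z|^{-d-\alpha}(\cdots)\,dy$ produces a singularity of order $\delta_I^{-\alpha}(z)$ as $z$ approaches $\partial I$ (since $|y-z|\ge\delta_I(z)$ is the only control). Writing $z$ in polar coordinates $(\rho,\varphi_1,\ldots)$ with $\delta_I(z)\simeq\rho\sin(\varphi_1-\varphi_\varepsilon)$ on the slab, the angular integral becomes $\int_0^{\pi-2\varphi_\varepsilon}\varphi^{-\alpha}\,d\varphi$, which \emph{diverges} for $\alpha\ge1$. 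The ``slab content $\asymp\varepsilon v^{d-1}$'' heuristic, which effectively pretends $\delta_I(z)$ can be replaced by $\rho\varepsilon$ throughout, misses this near-boundary non-integrability: the relevant regime is $\delta_I(z)\downarrow0$ with $\rho$ fixed, not $\delta_I(z)\simeq\rho\varepsilon$. So your argument as written works only for $\alpha<1$.

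The paper closes this gap with two boundary-decay estimates, applied after splitting the time variable at $q=1/2$. For $q\in[0,1/2]$ (and for $q\in[1/2,1]$ when $|z|\ge1$) it bounds $p(1-q,z,x)\le c$, collapses $\int_I\int_0^\infty p_I(q,w,y)\nu(y-z)\,dq\,dy$ to the Poisson kernel $K_I(w,z)$, and then uses the \emph{exact} half-space Poisson kernel formula $K_H(y_1,y_2)=C\,\delta_H^{\alpha/2}(y_1)\delta_H^{-\alpha/2}(y_2)|y_1-y_2|^{-d}$ (comparing $I$ to a supporting half-space $H(z)$ at each $z$) to get a $\delta_I^{-\alpha/2}(z)$ singularity. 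For $q\in[1/2,1]$ with $|z|<1$ it uses the sharp interior-to-boundary heat kernel bound $p_I(q,w,y)\le c\,\delta_I^{\alpha/2}(y)$ (Siudeja \cite{S}, Theorem~1.6/Corollary~1.7), so that the $y$-integral yields $\delta_I^{-\alpha/2}(z)$ rather than $\delta_I^{-\alpha}(z)$. In either case the angular integral is $\int\varphi^{-\alpha/2}\,d\varphi$, convergent for all $\alpha<2$, and produces exactly the factor $\varepsilon^{1-\alpha/2}$ appearing in the statement. Without one of these two ingredients, which your proposal does not name, the estimate cannot close for $\alpha\in[1,2)$.
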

The proof of this Proposition is fairly long and technical.  In order not to interrupt the flow, we continue with the proof of Theorem \ref{main} and return to the proof of the Proposition later. 

It follows from (\ref{comp})  that 
\begin{gather}\label{rD-rH}
  \int_{\cD_2} |r_D(t,x,x)-r_{H^*(x)}(t,x,x)|dx\leq
  \int_{\cD_2} r_{I_r(p(x))}(t,x,x)-r_{U^{c}_{r}(p(x))}(t,x,x) dx
\end{gather}

Let us first observe that 
 $ \delta_{I_r(p(x))}(x)\le \delta_D(x)\le \delta_{U_r(p(x))}(x) = |x-p(x)|.$
Notice also that by the definition of $\Gamma_r$ and $I_r$, and the fact that $x \in \Gamma_{2s}(p(x),\varepsilon)$, $s < r/4$ and $\varepsilon < 1/4$, we have that $x - p(x)$ and $\delta_{I_r(p(x))}(x)$ are comparable. That is, there exists a constant $c \in (0,1)$ such that $c |x - p(x)| \le \delta_{I_r(p(x))}(x)$.
Therefore
 $ c \delta_D(x)\le\delta_{I_r(p(x))}(x)$ 
and we conclude that $c \delta_D(x)$ and $\delta_{I_r(p(x))}(x)$ are comparable.  
This, together with \eqref{rD-rH} and Proposition \ref{rI-rU} gives
\begin{gather*}
  \begin{split}
  \int_{\cD_2} |r_D(t,x,x)&-r_{H^*(x)}(t,x,x)|dx
  \\&\leq
  c \frac{\varepsilon^{1 - \alpha/2}\vee \sqrt{\varepsilon}}{t^{d/\alpha}} \int_{\cD_2}\left(\left(\frac{t^{1/\alpha}}{\delta_{I_r(p(x))}(x)}\right)^{d + \alpha-1} \wedge 1\right)dx
  \\&\leq
  c \frac{\varepsilon^{1 - \alpha/2}\vee \sqrt{\varepsilon}}{t^{d/\alpha}} \int_{D}\left(\left(\frac{t^{1/\alpha}}{\delta_{D}(x)}\right)^{d + \alpha-1} \wedge 1\right)dx
  \end{split}
\end{gather*}
Once again  we apply Proposition \ref{lipint} with $\eta=t^{1/\alpha}$ and $f(r)=r^{-d-\alpha+1}\wedge 1$. For small enough $t$ we get
\begin{gather*}
  \begin{split}
  t^{d/\alpha} \int_{\cD_2} |r_D(t,x,x)&-r_{H^*(x)}(t,x,x)|dx
  \\&\leq
  c (\varepsilon^{1 - \alpha/2}\vee \sqrt{\varepsilon}) \cH(\partial D)t^{1/\alpha}\int_0^\infty \left(r^{-d-\alpha+1} \wedge 1\right)dr
  \end{split}
\end{gather*}
To complete the proof of  (\ref{enough}), it remains to show that the quantity 
$$
t^{d/\alpha} \int_{\cD_2} r_{H^*(x)}(t,x,x) \, dx
$$
gives the second term $C_2 \cH(\partial D) t^{1/\alpha}$ in the asymptotics plus an error term of order $c(\varepsilon) t^{1/\alpha}$.

Recall that 
\begin{gather*}
  r_{H^*(x)}(t,x,x)=f_H(t,\delta_{H^*(x)}(x))=f_H(t,\delta_{D}(x)).
\end{gather*}
Therefore
\begin{gather*}
  \begin{split}
    t^{d/\alpha}\int_{\cD_2} r_{H^*(x)}(t,x,x)dx&= t^{d/\alpha}\int_{\cD_2} f_H(t,\delta_{D}(x))dx \\
    &= t^{d/\alpha} \int_{D} f_H(t,\delta_{D}(x))dx - t^{d/\alpha} \int_{\cD_1 \cup \cD_3} f_H(t,\delta_{D}(x))dx.
  \end{split}
\end{gather*}
By the same arguments as in (\ref{D1}), (\ref{D3a}), (\ref{D3b}) we have  that 
$$
t^{d/\alpha} \int_{\cD_1 \cup \cD_3} f_H(t,\delta_{D}(x))dx \le c(\varepsilon) t^{1/\alpha},
$$
where $c(\varepsilon) \to 0$ as $\varepsilon \to 0$. 

Scaling now yields 
$$
t^{d/\alpha} \int_{D} f_H(t,\delta_{D}(x))dx = \int_{D} f_H\left( 1,\frac{\delta_D(x)}{t^{1/\alpha}} \right)dx.
$$
By  Lemmas \ref{rD} and \ref{rDlipschitz2} the function $q \to f_H(1,q)$ satisfies the assumptions of Proposition \ref{lipint}.
Hence  we have that for small enough $t$,
\begin{gather*}
  \left| \int_{D} f_H\left( 1,\frac{\delta_D(x)}{t^{1/\alpha}} \right)dx- C_2 \cH(\partial D) t^{1/\alpha}\right|\le \varepsilon t^{1/\alpha}.
\end{gather*}
This verifies  (\ref{enough}) and finishes the proof of the Theorem \ref{main}.
\end{proof}


\begin{proof}[Proof of Proposition \ref{rI-rU}]
We may assume that $p = 0$, ${\nu}(0) = (1,0,\ldots,0)$.
To simplify notation let us define $\cI=I_{r/t^{1/\alpha}}$ and $\cU=U^c_{r/t^{1/\alpha}}$.

Put
\begin{gather*}
  I=\left\{ y: y\cdot \nu(0)> \varepsilon|y|  \right\},\\
  U=\left\{ y: y\cdot \nu(0)<-\varepsilon|y|  \right\}\\
  \Gamma(0,\varepsilon)=\left\{y:y\cdot \nu(0)>\sqrt{1-\varepsilon^2}|y|\right\}
\end{gather*}
By scaling (formula (\ref{scalingr})) and Proposition 2.3 in \cite{BK} we have
\begin{eqnarray*}
  && r_{I_r}(t,x,x) - r_{U^c_r}(t,x,x) \\
&& = \frac{1}{t^{d/\alpha}} \left(r_\cI\left(1,\frac{x}{t^{1/\alpha}},\frac{x}{t^{1/\alpha}}\right) - r_\cU\left(1,\frac{x}{t^{1/\alpha}},\frac{x}{t^{1/\alpha}}\right)\right) \\
&& = \frac{1}{t^{d/\alpha}} E^{x/(t^{1/\alpha})}\left(\tau_\cI < 1, \, X(\tau_\cI) \in \cU \setminus \cI, \, p_{\cU}\left(1 - \tau_I,X(\tau_I),\frac{x}{t^{1/\alpha}}\right)\right) \\
&& \le \frac{1}{t^{d/\alpha}} E^{x/(t^{1/\alpha})}\left(\tau_\cI < 1, \, X(\tau_\cI) \in \cU \setminus \cI, \, p\left(1 - \tau_\cI,X(\tau_\cI),\frac{x}{t^{1/\alpha}}\right)\right) 
\end{eqnarray*}
Put $w = \frac{x}{t^{1/\alpha}} \in \Gamma_{2s/t^{1/\alpha}}(p,\varepsilon)$ and
$$
R(w) = E^w(\tau_\cI < 1, \, X(\tau_\cI) \in \cU \setminus \cI, \, p(1 - \tau_\cI,X(\tau_\cI),w)).
$$
By the space-time Ikeda--Watanabe formula we have
\begin{equation*}
R(w) = \int_\cI \int_0^1 p_\cI(q,w,y) \int_{\cU \setminus \cI} \nu(y - z) p(1 - q,z,w) \, dz \, dq \, dy.
\end{equation*}
Note that
\begin{gather*}
  \cU\setminus\cI= B^c(0,r/t^{1/\alpha})\cup (U^c\setminus I),\\
  \cI \subset I.
\end{gather*}
Therefore,
\begin{gather}
\label{AwIkeda}
  \begin{split}
    R(w)&\leq \int_I \int_0^1 p_I(q,w,y) \int_{U^c\setminus I} \nu(y - z) p(1 - q,z,w) \, dz \, dq \, dy
    \\& + \int_\cI \int_0^1 p_\cI(q,w,y) \int_{B^c(0,r/t^{1/\alpha})} \nu(y - z) p(1 - q,z,w) \, dz \, dq \, dy
    \\&=
    A(w)+B(w).
  \end{split}
\end{gather}
In order to prove the proposition, it is sufficient to show that for any $w \in \Gamma_{2s/t^{1/\alpha}}(p,\varepsilon)$
\begin{gather}
\label{Awda}
A(w) \le c \varepsilon^{1 - \alpha/2} |w|^{-d - \alpha},\\
A(w) \le c \varepsilon^{1 - \alpha/2}\label{Aw}
\end{gather}
and that 
\begin{gather}
B(w) \le c \sqrt\varepsilon |w|^{-d - \alpha+1},\label{Bwda}\\
B(w) \le c \sqrt\varepsilon.\label{Bw}
\end{gather}
Here we use the fact that $\delta_{I_r(p(x))}(x) \simeq |x|$ for $x \in \Gamma_{2s}(p,\varepsilon)$, $s < r/4$, $\varepsilon < 1/4$.

We first prove the easier inequalities involving $B(w)$.
Since $s<r/4$ and $w \in \Gamma_{2s/t^{1/\alpha}}(p,\varepsilon)$ we have
\begin{gather*}
\delta_{B^c(0,r/t^{1/\alpha})}(w)\geq cr/t^{1/\alpha}.
\end{gather*}
Therefore by (\ref{ptxy}) for $q \in (0,1)$ and $z \in B^c(0,r/t^{1/\alpha})$ we have 
\begin{gather*}
  p(1-q,z,w)\leq c\left( \frac{t^{1/\alpha}}{r} \right)^{d+\alpha} .
\end{gather*}
Substituting back into (\ref{AwIkeda}) leads to
\begin{gather*}
  \begin{split}
  B(w)&\leq c\left( \frac{t^{1/\alpha}}{r} \right)^{d+\alpha}\int_\cI \int_0^1 p_\cI(q,w,y) \int_{B^c(0,r/t^{1/\alpha})} \nu(y - z) \, dz \, dq \, dy\\& =
c\left( \frac{t^{1/\alpha}}{r} \right)^{d+\alpha}  P^{w}\{\tau_{\cI}<1, X_{\tau_{\cI}}\in {B^c(0,r/t^{1/\alpha})}\}
    \\&\leq
c\left( \frac{t^{1/\alpha}}{r} \right)^{d+\alpha}
\leq 
c\left( \frac{t^{1/\alpha}}{s} \right)^{d+\alpha}\leq c (\sqrt{\varepsilon})^\beta |w|^{-d-\alpha+\beta},
  \end{split}
\end{gather*}
for any $\beta \le d + \alpha$.
The inequalities \eqref{Bw} and \eqref{Bwda} follow if we take $\beta$ equal $d+\alpha$ and $1$ respectively.

The proof of the inequalities (\ref{Awda}) and (\ref{Aw}) is quite technical. It will be divided into several steps.

At first we will need the following auxiliary lemma.
\begin{lemma}
\label{UcI}
For any $\varepsilon \in (0,1/4)$, $w \in \Gamma(0,\varepsilon)$, $M \in (0,\infty]$ we have
\begin{eqnarray*}
   \int_{(U^c \setminus I)\cap B(0,M)} \frac{dz}{\delta_I^{\alpha/2}(z) \, |z - w|^{\gamma}}\leq 
\left\{ \begin{array}{ll}
      \displaystyle  
  c_{\gamma} \varepsilon^{1 - \alpha/2} |w|^{d -\alpha/2 - \gamma} 
  &\textrm{for $\gamma > d -\alpha/2$,}
  \\
      \displaystyle 
  c_{\gamma} \varepsilon^{1 - \alpha/2} M^{d -\alpha/2 - \gamma} 
  &\textrm{for $0 < \gamma < d -\alpha/2$.} 
  \\
    \end{array}
  \right.
\end{eqnarray*}
The constant $c_{\gamma}$ depends only on $d,\alpha,\gamma$. When $M = \infty$, we understand $B(0,M) = \Rd$. 
\end{lemma}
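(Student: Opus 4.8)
The plan is to reduce the integral to a more tractable form by understanding the geometry of the region $U^c\setminus I$ and the behaviour of $\delta_I(z)$ on it. First I would set up coordinates so that $\nu(0)=e_1$, write $z=(z_1,z')$ with $z'\in\mathbb{R}^{d-1}$, and observe that $U^c\setminus I$ is the ``double cone slab'' $\{z: -\varepsilon|z|\le z_1\le \varepsilon|z|\}$, i.e.\ the set of points making angle roughly $\pi/2$ (to within $O(\varepsilon)$) with $\nu(0)$. The key geometric estimate I expect to need is a two-sided bound $\delta_I(z)\simeq |z_1|+\varepsilon|z'|$ (up to constants depending only on $d,\alpha$) for $z\in U^c\setminus I$: the boundary of the cone $I$ is $\{z_1=\varepsilon|z|\}$, and the distance from a point in the slab to this surface is comparable to $|z_1|$ when $|z_1|\gtrsim \varepsilon|z'|$ and comparable to $\varepsilon|z'|$ otherwise. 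I would state this as a short sublemma and prove it by an elementary dot-product computation with the cone's boundary.

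Next, using $\delta_I(z)^{-\alpha/2}\le c(|z_1|+\varepsilon|z'|)^{-\alpha/2}$, I would pass to the integral
$$
\int_{\{|z_1|\le \varepsilon|z'|\}\cap B(0,M)} \frac{dz}{(|z_1|+\varepsilon|z'|)^{\alpha/2}\,|z-w|^{\gamma}},
$$
noting that on the slab $|z'|\simeq|z|$. I would do the $z_1$-integral first, over the interval $|z_1|\le \varepsilon|z'|$ (of length $\simeq\varepsilon|z'|$): since $\int_{0}^{a}(s+b)^{-\alpha/2}ds \simeq a\,b^{-\alpha/2}$ when $a\le b$ (here $a\simeq\varepsilon|z'|=b$), this yields a factor $\simeq \varepsilon|z'|\cdot(\varepsilon|z'|)^{-\alpha/2}=\varepsilon^{1-\alpha/2}|z'|^{1-\alpha/2}$. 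Also, since $|z_1|$ is small compared to $|z'|$ and $w$ lies in a narrow cone around $e_1$, on this slab $|z-w|\simeq |z'|$ when $|z'|\gg|w|$ and is bounded below by $c|w|$ in general; more simply one keeps $|z-w|$ and uses $|z|\simeq|z'|$. Thus the remaining integral is essentially
$$
\varepsilon^{1-\alpha/2}\int_{\mathbb{R}^{d-1},\ |z'|\le cM} \frac{|z'|^{1-\alpha/2}}{|z'-\tilde w|^{\gamma}+\cdots}\,dz',
$$
a standard $(d-1)$-dimensional convolution-type integral whose scaling exponent is $(d-1)+(1-\alpha/2)-\gamma = d-\alpha/2-\gamma$; it converges at infinity precisely when $\gamma>d-\alpha/2$, giving the bound $c_\gamma\varepsilon^{1-\alpha/2}|w|^{d-\alpha/2-\gamma}$, and when $\gamma<d-\alpha/2$ the integral is dominated by its behaviour near $|z'|=M$, giving $c_\gamma\varepsilon^{1-\alpha/2}M^{d-\alpha/2-\gamma}$. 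Near $|z'|=|w|$ and near $0$ one checks integrability using $\gamma<d$ (which holds in both cases since $d-\alpha/2<d$ and the intended applications have $\gamma<d$).

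I expect the main obstacle to be the geometric distance estimate $\delta_I(z)\simeq |z_1|+\varepsilon|z'|$ together with handling the interaction between the singularity at $z=w$ and the slab geometry: one must be careful that $w\in\Gamma(0,\varepsilon)$ lies on the ``good'' side, so $w\notin U^c\setminus I$ and in fact $|z-w|\gtrsim \varepsilon|w|$ is not quite enough—rather one wants $|z-w|$ to behave like $\max(|z'|,|w|)$ on the slab, which requires that the angular separation between the slab and the cone $\Gamma(0,\varepsilon)$ containing $w$ is bounded below (by something like $\varepsilon$, but the $\varepsilon$'s cancel appropriately). The cleanest route is probably to split into $|z|\le 2|w|$ and $|z|\ge 2|w|$: on the far region $|z-w|\simeq|z|\simeq|z'|$ and the computation above applies directly; on the near region one bounds $|z-w|^{-\gamma}$ crudely and integrates $\delta_I(z)^{-\alpha/2}$ over the slab inside $B(0,2|w|)$, which is again an explicit power-of-$|w|$ computation. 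I would carry out both cases and combine, obtaining the stated bounds.
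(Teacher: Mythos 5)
Your overall strategy --- bound $\delta_I^{-\alpha/2}$, integrate first in the transverse direction to extract the factor $\varepsilon^{1-\alpha/2}$, then evaluate the remaining power-of-distance integral by splitting at $|z|\simeq|w|$ --- is the same as the paper's, just written in Cartesian rather than polar coordinates (the paper sets $\varphi_{\varepsilon}=\arccos\varepsilon$, uses $\delta_I(z)=\rho\sin(\varphi_1-\varphi_{\varepsilon})$ on the slab, and splits into $V_1=(U^c\setminus I)\cap B(0,|w|\wedge M)$, where $|z-w|\simeq|w|$, and its complement, where $|z-w|\simeq|z|$). However, the ``key geometric estimate'' you propose is false. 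Projecting onto the 2--plane through $z$ and the axis (or converting the paper's polar formula) gives, for $z$ in the slab, $\delta_I(z)=\varepsilon|z'|-z_1\sqrt{1-\varepsilon^2}$. This vanishes as $z$ approaches $\partial I$ from inside the slab, while your proposed comparable quantity $|z_1|+\varepsilon|z'|$ stays $\simeq\varepsilon|z'|$ there, so the two-sided bound $\delta_I(z)\simeq|z_1|+\varepsilon|z'|$ --- and with it the pointwise majorant $\delta_I(z)^{-\alpha/2}\le c(|z_1|+\varepsilon|z'|)^{-\alpha/2}$ that your computation rests on --- cannot hold. (Also note that on the slab one always has $|z_1|\le\varepsilon|z'|/\sqrt{1-\varepsilon^2}$, so your case ``$|z_1|\gtrsim\varepsilon|z'|$'' is essentially vacuous and your claim collapses to $\delta_I(z)\simeq\varepsilon|z'|$ uniformly, which is plainly wrong near $\partial I$.)

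The reason the final answer nevertheless comes out right is that the singularity of $\delta_I^{-\alpha/2}$ on $\partial I$ is integrable because $\alpha/2<1$. With the correct formula the inner integral is
\begin{equation*}
\int_{-\varepsilon|z'|/\sqrt{1-\varepsilon^2}}^{\varepsilon|z'|/\sqrt{1-\varepsilon^2}}\bigl(\varepsilon|z'|-z_1\sqrt{1-\varepsilon^2}\bigr)^{-\alpha/2}\,dz_1=\frac{(2\varepsilon|z'|)^{1-\alpha/2}}{(1-\alpha/2)\sqrt{1-\varepsilon^2}}\simeq(\varepsilon|z'|)^{1-\alpha/2},
\end{equation*}
so your subsequent power-counting in $z'$ goes through verbatim; you simply need to replace the false pointwise bound by this exact one-dimensional computation (or work in polar coordinates as the paper does, where the angular integral $\int_{\varphi_{\varepsilon}}^{\pi-\varphi_{\varepsilon}}\sin^{-\alpha/2}(\varphi_1-\varphi_{\varepsilon})\,d\varphi_1\le\int_0^{\pi-2\varphi_{\varepsilon}}\varphi^{-\alpha/2}\,d\varphi\simeq\varepsilon^{1-\alpha/2}$ is immediate). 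One further small inaccuracy: the angular gap between $\Gamma(0,\varepsilon)$, which contains $w$, and the slab $U^c\setminus I$ is $\arccos\varepsilon-\arcsin\varepsilon$, which for $\varepsilon<1/4$ is bounded below by an absolute constant, not merely by ``something like $\varepsilon$''; this is what gives $|z-w|\gtrsim\max(|z|,|w|)$ cleanly and makes the near/far split work with $d,\alpha,\gamma$-dependent constants only.
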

In fact, we will use this lemma only in the cases when $\gamma = d$, $M = \infty$ and when $\gamma = d - \alpha$, $M = 1$. We state it in this form two avoid repeating the proof twice. 

\begin{proof}
Let us introduce polar coordinates $(\rho,\varphi_1,\ldots,\varphi_{d - 1})$, with center at $p = 0$ and principal axis $\nu(0) = (1,0,\ldots,0)$. There are some technical differences between the case $d = 2$ when $\varphi_1 \in [0,2\pi)$ and the case $d \ge 3$ when $\varphi_1 \in [0,\pi]$. We will make calculations for the case $d \ge 3$. The case $d = 2$ is essentially the same, simply taking care of the restriction on the angle. 

Let $\varphi_{\varepsilon} \in [0,\pi]$ be the angle such that $\cos(\varphi_{\varepsilon}) = \varepsilon$. Note that
$$
U^c \setminus I = \{(\rho,\varphi_1,\ldots,\varphi_{d - 1}): \,  \varphi_1 \in (\varphi_{\varepsilon}, \pi - \varphi_{\varepsilon})\}
$$
and that $\delta_I(z) = \rho \sin(\varphi_1 - \varphi_{\varepsilon})$ for $z \in U^c \setminus I$.

Let $V_1 = (U^c \setminus I)\cap B(0,|w| \wedge M)$ and $V_2 = (U^c \setminus I)\cap B(0,M) \cap B^c(0,|w| \wedge M)$ with the understanding that if $|w| \ge M$,  then $V_2$ is empty.  Note that $|z - w| \simeq |w|$ for $z \in V_1$ and $|z - w| \simeq |z|$ for $z \in V_2$. We have
\begin{eqnarray*}
&& \int_{V_1} \frac{dz}{\delta_I^{\alpha/2}(z) \, |z - w|^{\gamma}}
\le 
\frac{c_{\gamma}}{|w|^{\gamma}} \int_{V_1} \frac{dz}{\delta_I^{\alpha/2}(z)} \\
&=& \frac{c_{\gamma}}{|w|^{\gamma}} 
\int_0^{|w| \wedge M} \int_{\varphi_{\varepsilon}}^{\pi - \varphi_{\varepsilon}}
\frac{\rho^{d - 1} \sin^{d - 2}(\varphi_1)}{\rho^{\alpha/2} \sin^{\alpha/2}(\varphi_1 - \varphi_{\varepsilon})} \, d\varphi_1 \, d\rho \\
&\le& \frac{c_{\gamma}}{|w|^{\gamma}} 
\int_0^{|w| \wedge M} \rho^{d - 1 - \alpha/2} \, d\rho \int_0^{\pi - 2  \varphi_{\varepsilon}} \frac{1}{\varphi^{\alpha/2}} \, d\varphi \\
&\le& c_{\gamma} |w|^{-{\gamma}}(|w| \wedge M)^{d - \alpha/2} \, \, \varepsilon^{1 - \alpha/2}.
\end{eqnarray*}
The last inequality follows from the fact that for $\varepsilon \in (0,1/4)$ we have $\sin(\pi - 2 \varphi_{\varepsilon}) \simeq 2 \sin(\pi/2 - \varphi_{\varepsilon}) = 2 \varepsilon$, so $\pi - 2 \varphi_{\varepsilon} \le c \varepsilon$. 

Similarly we have
\begin{eqnarray*}
&& \int_{V_2} \frac{dz}{\delta_I^{\alpha/2}(z) \, |z - w|^{\gamma}}
\le 
c_{\gamma} \int_{V_2} \frac{dz}{\delta_I^{\alpha/2}(z) \, |z|^{\gamma}} \\
&\le& c_{\gamma}
\int_{|w| \wedge M}^M \int_{\varphi_{\varepsilon}}^{\pi - \varphi_{\varepsilon}}
\frac{\rho^{d - 1} \sin^{d - 2}(\varphi_1)}{\rho^{\gamma + \alpha/2} \sin^{\alpha/2}(\varphi_1 - \varphi_{\varepsilon})} \, d\varphi_1 \, d\rho \\
&\le& c_{\gamma}
\int_{|w| \wedge M}^M \rho^{d - 1 - \alpha/2 - \gamma} \, d\rho \, \, \varepsilon^{1 - \alpha/2},
\end{eqnarray*}
and the lemma follows.
\end{proof}  

Now we will show that for $w \in \Gamma(0,\varepsilon)$ we have
\begin{equation}
\label{KI}
\int_{U^c \setminus I}K_I(w,z) \, dz \le c \varepsilon^{1 - \alpha/2},
\end{equation}
where $K_I(w,z)$ is the Poisson kernel for $I$.
For any $z \in U^c \setminus I$, let $H(z)$ be a halfspace such that $I \subset H(z)$, $z \in (H(z))^c$ and $\delta_I(z) = \delta_{H(z)}(z)$. Recall that when $H$ is a half--space, $y_1 \in H$, $y_2 \in H^c$, then  (see e.g. (2.5) in \cite{BuK})
\begin{equation}
\label{PH}
K_H(y_1,y_2) = C_{\alpha}^d \frac{\delta^{\alpha/2}_H(y_1)}{\delta^{\alpha/2}_H(y_2) |y_1 - y_2|^d},
\end{equation}
where $C_{\alpha}^d = \Gamma(d/2) \pi^{-d/2 - 1} \sin(\pi \alpha/2)$.
It follows that 
\begin{eqnarray*}
\int_{U^c \setminus I}K_I(w,z) \, dz 
&\le& \int_{U^c \setminus I}K_{H(z)}(w,z) \, dz \\
&\le& c |w|^{\alpha/2} \int_{U^c \setminus I} \frac{dz}{\delta_I^{\alpha/2}(z) \, |z - w|^{d}}.
\end{eqnarray*}
This gives (\ref{KI}) by Lemma \ref{UcI} for $\gamma = d$ and $M = \infty$.

We will now show (\ref{Awda}).  Note that by (\ref{ptxy}), for  $w \in \Gamma(0,\varepsilon)$, $z \in U^c \setminus I$ and $q \in [0,1)$ we have
$$
p(1 - q,z,w) \le c \frac{1 - q}{|z - w|^{d + \alpha}} \le \frac{c}{|w|^{d + \alpha}}.
$$
Using this, (\ref{Poisson}) and (\ref{AwIkeda}), we get
\begin{eqnarray*}
A(w) &\le& 
\frac{c}{|w|^{d + \alpha}}
\int_I \int_0^{\infty} p_I(q,w,y) \int_{U^c \setminus I} \nu(y - z) \, dz \, dq \, dy \\
&=& \frac{c}{|w|^{d + \alpha}}
\int_I G_I(w,y) \int_{U^c \setminus I} \nu(y - z)  \, dz \, dy 
\\ &=& \frac{c}{|w|^{d + \alpha}} \int_{U^c \setminus I}K_I(w,z) \, dz.
\end{eqnarray*}
Now (\ref{Awda}) follows from (\ref{KI}).

Our next aim is to show (\ref{Aw}). By (\ref{AwIkeda}) we have
\begin{eqnarray*}
A(w) &=& 
\int_I \int_0^{1/2} p_I(q,w,y) \int_{U^c \setminus I} \nu(y - z) p(1 - q,z,w) \, dz \, dq \, dy \\
&+& \int_I \int_{1/2}^1 p_I(q,w,y) \int_{(U^c \setminus I) \cap B^c(0,1)} \nu(y - z) p(1 - q,z,w) \, dz \, dq \, dy \\
&+& \int_I \int_{1/2}^1 p_I(q,w,y) \int_{(U^c \setminus I) \cap B(0,1)} \nu(y - z) p(1 - q,z,w) \, dz \, dq \, dy \\
&=& \text{I} + \text{II} + \text{III}.
\end{eqnarray*}
For $q \in [0,1/2]$, we have $p(1-q,z,w) \le c$. Similarly for $q \in [1/2,1)$, $w \in \Gamma(0,\varepsilon)$ and $z \in (U^c \setminus I) \cap B^c(0,1)$, we have $p(1-q,z,w) \le c |w - z|^{-d - \alpha} \le c$.
Using this and (\ref{KI}) we obtain
\begin{eqnarray*}
\text{I} + \text{II} &\le&
c \int_I \int_0^{\infty} p_I(q,w,y) \int_{U^c \setminus I} \nu(y - z) \, dz \, dq \, dy \\
&=& c \int_{U^c \setminus I}K_I(w,z) \, dz \le c \varepsilon^{1 - \alpha/2}.
\end{eqnarray*}

As for  III, by \cite[Theorem 1.6 and Corollary 1.7]{S} we have 
$p_I(q,w,y) \le c \delta_I^{\alpha/2}(y)$ for $q \in [1/2,1)$, $y \in I$. Hence,
\begin{equation}
\label{III} 
\text{III} \le 
c \int_{(U^c \setminus I) \cap B(0,1)} \int_I \frac{\delta_I^{\alpha/2}(y)}{|y - z|^{d + \alpha}} \, dy \int_{1/2}^1 p(1 - q,z,w) \, dq \, dz.
\end{equation}
Thus for $z \in (U^c \setminus I) \cap B(0,1)$ we have
$$
\int_I \frac{\delta_I^{\alpha/2}(y)}{|y - z|^{d + \alpha}} \, dy \le
c \int_I \frac{|y - z|^{\alpha/2}}{|y - z|^{d + \alpha}} \, dy \le
c \int_{B(z,\delta_I(z))} \frac{dy}{|y - z|^{d + \alpha/2}} = 
c \delta_I^{-\alpha/2}(z).
$$
We also have
$$
\int_{1/2}^1 p_I(1 - q,z,w) \, dq \le \int_{0}^{\infty} p_I(q,z,w) \, dq = \frac{c}{|z - w|^{d - \alpha}}.
$$
So by (\ref{III}) we get
$$
\text{III} \le c \int_{(U^c \setminus I) \cap B(0,1)} \frac{dz}{\delta_I^{\alpha/2}(z) \, |z - w|^{d - \alpha}}.
$$
Using Lemma \ref{UcI} for $\gamma = d - \alpha$ and $M = 1$, we finally arrive at (\ref{Aw}).
\end{proof}


\end{document}